\newcommand{\burl}[1]{\textcolor{blue}{\url{#1}}}
\newcommand{\twocase}[5]{#1 \begin{cases} #2 & \text{{\rm #3}}\\ #4
&\text{{\rm #5}} \end{cases}   }
\title{Sums and differences of correlated random sets}
\author[Do]{Thao Do}
\email{\textcolor{blue}{\href{mailto:thao.do@stonybrook.edu}{thao.do@stonybrook.edu}}}
\address{Mathematics Department, Stony Brook University, Stony Brook, NY, 11794}
\author[Kulkarni]{Archit Kulkarni}
\email{\textcolor{blue}{\href{mailto:auk@andrew.cmu.edu}{auk@andrew.cmu.edu}}}
\address{Department of Mathematical Sciences, Carnegie Mellon University, Pittsburgh, PA 15213}
\author[Miller]{Steven J. Miller}
\email{\textcolor{blue}{\href{mailto:sjm1@williams.edu}{sjm1@williams.edu}},  \textcolor{blue}{\href{Steven.Miller.MC.96@aya.yale.edu}{Steven.Miller.MC.96@aya.yale.edu}}}
\address{Department of Mathematics and Statistics, Williams College, Williamstown, MA 01267}
\author[Moon]{David Moon}
\email{\textcolor{blue}{\href{mailto:dm7@williams.edu}{dm7@williams.edu}}}
\address{Department of Mathematics \& Statistics, Williams College, Williamstown, MA
01267}
\author[Wellens]{Jake Wellens}
\email{\textcolor{blue}{\href{mailto:jwellens@caltech.edu}{jwellens@caltech.edu}}}
\address{Department of Mathematics, California Institute of Technology, Pasadena, CA 91125}
\subjclass[2010]{11B13, 11P99 (primary), 05B10, 11K99, 82B26 (secondary).}
\keywords{More Sum Than Difference sets, correlated random variables, phase transition.}
\date{\today}
\thanks{This research was conducted as part of the 2013 SMALL REU program at Williams College and was partially supported by NSF grant DMS0850577 and Williams College; the third named author was partially supported by NSF grants DMS0970067 and DMS1265673. We would like to thank our  colleagues from SMALL for helpful discussions, and to Kevin O'Bryant for suggesting a variant of this problem at CANT 2013.}
\newcommand\be{\begin{equation}}
\newcommand\ee{\end{equation}}
\newcommand\bea{\begin{eqnarray}}
\newcommand\eea{\end{eqnarray}}
\newtheorem{thm}{Theorem}[section]
\newtheorem{conj}[thm]{Conjecture}
\newtheorem{cor}[thm]{Corollary}
\newtheorem{lemma}[thm]{Lemma}
\newtheorem{prop}[thm]{Proposition}
\newtheorem{defi}[thm]{Definition}
\newtheorem{remark}[thm]{Remark}
\numberwithin{equation}{section}
\begin{document}

\maketitle
\begin{abstract}
Many fundamental questions in additive number theory (such as Goldbach's conjecture, Fermat's last theorem, and the Twin Primes conjecture) can be expressed in the language of sum and difference sets. As a typical pair of elements contributes one sum and two differences, we expect that $|A-A| > |A+A|$ for a finite set $A$. However, in 2006 Martin and O'Bryant showed that a positive proportion of subsets of $\{0, \dots, n\}$ are sum-dominant, and Zhao later showed that this proportion converges to a positive limit as $n \to \infty$. Related problems, such as constructing explicit families of sum-dominant sets, computing the value of the limiting proportion, and investigating the behavior as the probability of including a given element in $A$ to go to zero, have been analyzed extensively.

We consider many of these problems in a more general setting. Instead of just one set $A$, we study sums and differences of pairs of \emph{correlated} sets $(A,B)$. Specifically, we place each element $a \in \{0,\dots, n\}$ in $A$ with probability $p$, while $a$ goes in $B$ with probability $\rho_1$ if $a \in A$ and probability $\rho_2$ if $a \not \in A$. If $|A+B| > |(A-B) \cup (B-A)|$, we call the pair $(A,B)$ a \emph{sum-dominant $(p,\rho_1, \rho_2)$-pair}. We prove that for any fixed $\vec{\rho}=(p, \rho_1, \rho_2)$ in $(0,1)^3$, $(A,B)$ is a sum-dominant $(p,\rho_1, \rho_2)$-pair with positive probability, and show that this probability approaches a limit $P(\vec{\rho})$. Furthermore, we show that the limit function $P(\vec{\rho})$ is continuous. We also investigate what happens as $p$ decays with $n$, generalizing results of Hegarty-Miller on phase transitions. Finally, we find the smallest sizes of MSTD pairs.
\end{abstract}

\tableofcontents

\section{Introduction}\label{sec:intro}
Given a finite set $A\subset \mathbb{Z}$, it is natural to compare the sizes of its sum set $A+A$ and difference set $A-A$, which are defined as
\be A+A \ = \ \{a+b:a,b\in A\}, \ \ \ A-A\ = \ \{a-b: a,b\in A\}. \ee We have two competing influences on their respective cardinalities. For any $a\in A$, $a-a$ is always equal to $0$ while $a+a$ is different for different values of $a$. On the other hand, since addition is commutative while subtraction is not, any two different numbers  $a,b\in A$ generate two differences $a-b$ and $b-a$ but only one sum $a+b$. We thus expect that most of the time the size of the difference set is at least that of the sum set; however, this is not always the case. A set whose sum set has more elements than its difference set is called \emph{sum dominant}, or a \textit{More Sums Than Differences} (MSTD) set. One of the earliest examples is due to Conway from the 1960's: $\{0,2,3,4,7,11,12,14\}$.

We briefly review some of the key results in the field. Martin and O'Bryant \cite{MO} in 2002 proved that $p_{\rm MSTD}(1/2;n)$, the probability that a uniformly chosen random subset of \be I_n\ :=\ \{0, 1, \dots, n\}\ee is an MSTD set, is greater than a positive constant for all $n\geq 14$; note that choosing subsets uniformly is equivalent to taking each element of $I_n$ independently of the others to be in our set with probability $p$ (and hence our notation). A similar result holds if instead each element of $I_n$ is chosen independently of the others with a fixed non-zero probability $p$, and again $p_{\rm MSTD}(p;n) > 0$. This is somewhat contrary to our original intuition that MSTD sets should be rare, though we will see later that this percentage, while positive, is quite small. Subsequent work by Zhao \cite{Zh2} proved that $p_{\rm MSTD}(1/2;n)$ converges to a limit when $n\to\infty$, and Iyer, Lazarev, Miller and Zhang \cite{ILMZ} generalized these results to comparisons of linear combinations of a set. These proofs are probabilistic and non-constructive; see \cite{Na,MOS,MPR,Zh1} for explicit constructions of infinite families of MSTD sets. Other results include the work of Hegarty and Miller \cite{HM} on the behavior of $p_{\rm MSTD}(p_n;n)$ as the probability $p_n$ of including an element in $A \subset I_n$ decays with $n$, and Hegarty's \cite{He} proof that the smallest size of an MSTD set is 8 and the example found by Conway is the smallest sum dominant set up to linear transformation.

All of the literature to date has looked at sums and differences of a set with itself. In this paper, we extend the theory to combinations of two subsets of integers (see \cite{DKMMWW} for another generalization, specifically to subsets of $D$-dimensional polytopes). Given two finite sets of integers $A$ and $B$, define their sum set and difference set by \bea A+B & \ = \ & \{a+b:a\in A , b\in B\},\nonumber\\ \pm (A-B) &\  = \ & (A-B)\cup (B-A)\ = \ \{a-b,b-a: a\in A, b\in B\}. \eea We investigate sums and differences of \emph{pairs} of subsets $(A,B) \subset \{0,1, \dots, n\}$, which are selected according to the dependent random process described below.

\begin{defi}
Fix a $\vec{\rho} = (p, \rho_1, \rho_2)\in [0,1]^3$. We call $(A,B)$ a \emph{$\vec{\rho}$-correlated pair} if each element $k\in I_n$ is chosen into $A$ and $B$ by the following rule:
\be \mathbb{P}(k\in A)\ = \ p;\quad\ \ \ \ \mathbb{P}(k\in B |k\in A)\ =\ \rho_1;\quad\ \ \ \mathbb{P}(k\in B|k\notin A)\ =\ \rho_2.
\ee
\end{defi}

We say a correlated pair $(A,B)$ is a \textit{More Sums Than Differences} (MSTD) or \textit{sum dominant}  pair if the size of their sum set is bigger than that of their difference set: $|A+B|>|\pm(A-B)|$. For each $n$, let $P_n(\vec{\rho})$ denote the probability a randomly chosen $\vec{\rho}-$correlated pair $(A,B)$ is an MSTD pair.

If $(\rho_1,\rho_2)=(1,0)$ then $B=A$ and thus the problem is reduced to comparing the sizes of the sum set and the difference set of $A$ with itself; this is the $(A,A)$ case, and is the only one that has been studied extensively in literature so far. If we let $(\rho_1,\rho_2)=(0,1)$, then $B$ contains all elements that are not in $A$ and thus $B$ is the complement of $A$; we call this the $(A,A^c)$ case. If we let $\rho_1=\rho_2$, then $A$ and $B$ are chosen independently. Finally, if $\vec{\rho}=(0.5, 0.5, 0.5)$ then $P_n(\vec{\rho})$ is simply the proportion of pairs of subsets of $\{0,1,\dots, n\}$ that are MSTD. In this case, we call the MSTD correlated pair simply an MSTD pair.

In this paper, we address three questions regarding MSTD correlated pairs.

\begin{itemize}

\item[(1)] For a fixed probability vector $\vec{\rho}$, does $P_n(\vec{\rho})$ converge to a positive number as $n\to\infty$?

\item[(2)] If we let $\vec{\rho}$ decay with $n$, does $P_n(\vec{\rho})$ converge to $0$ as $n\to\infty$?

\item[(3)] What are the minimal sizes of an MSTD pair and what are the minimal MSTD pairs up to linear transformation? We say $(m,n)$ is a minimal size of an MSTD pair if for any MSTD pair $(A,B)$ not having that size, then either $|A|>m$ or $|B|>n$. It can thus happen that there is more than one minimal size.

\end{itemize}

To address the first question, we exploit the probabilistic methods of Martin and O'Bryant \cite{MO} and Zhao \cite{Zh2}. We first construct a pair that has an MSTD \emph{fringe}; these are the elements near the endpoints of $A$ and typically control whether or not the set is sum-dominant (see Definition \ref{def_fringe} for details). Next we show that almost all MSTD correlated pairs are rich, which essentially means that we have an MSTD fringe and that a large interval of middle sums are obtained; see Definition \ref{def_rich} for details. From this we are able to answer completely the first question.

\begin{thm}\label{thmConvergence}
For each vector $\vec{\rho}=(p,\rho_1,\rho_2)\in [0,1]^3$, the proportion of sum dominant $\vec{\rho}$-correlated pairs of $I_n$ converges to a limit $P(\vec{\rho})$ as $n\to\infty$. Moreover, $P(\vec{\rho})=0$ if
$p\in\{0,1\}$ or $\rho_1+\rho_2\in \{0,2\}$, and $P(\vec{\rho})$ is strictly positive otherwise.
\end{thm}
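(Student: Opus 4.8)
The plan is to split the three assertions: vanishing on the boundary, convergence of $P_n(\vec\rho)$, and strict positivity in the interior regime. The engine for the last two is a \emph{fringe reduction}: since $A+B\subseteq\{0,\dots,2n\}$ and $\pm(A-B)\subseteq\{-n,\dots,n\}$ both lie in a universe of size $2n+1$, writing $X_n$ for the number of absent sums and $Y_n$ for the number of absent differences, the pair $(A,B)$ is MSTD precisely when $Y_n>X_n$. The whole argument will consist of showing that $Y_n-X_n$ is, with overwhelming probability, controlled by the behavior of $A,B$ near the endpoints of $I_n$.

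\textbf{Boundary cases ($P=0$).} When $p\in\{0,1\}$ or $\rho_1+\rho_2\in\{0,2\}$, one of $A,B$ is almost surely a deterministic set ($\emptyset$, the full interval $I_n$, or forced to equal $A$ or $A^c$ in a way that collapses the structure). In each case both $A+B$ and $\pm(A-B)$ become intervals (or are empty), and a direct interval computation shows $|A+B|\le|\pm(A-B)|$ with probability one. Hence $P_n(\vec\rho)=0$ for every $n$, which simultaneously gives convergence to $0$ and the claimed vanishing. I would dispatch these first, as they need only elementary arithmetic.

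\textbf{Convergence in the interior.} Assume now $p\in(0,1)$ and $\rho_1+\rho_2\in(0,2)$, and set $q=p\rho_1+(1-p)\rho_2=\mathbb P(k\in B)$, so that $pq>0$. The key estimate is that the expected number of absent sums $s$ with $m\le s\le 2n-m$, and of absent differences $d$ with $|d|\le n-m$, decays exponentially in $m$, \emph{uniformly in $n$}. Indeed such a sum or difference admits $\Omega(m)$ representations, from which one may extract $\Omega(m)$ pairwise independent ``realizing'' events (choosing representations whose coordinate elements are all distinct, so that the correlated statuses never collide), each occurring with probability $pq$; thus it is absent with probability at most $(1-pq)^{\Omega(m)}$, and summing gives a bound $\epsilon(m)\to 0$ independent of $n$. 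Consequently, off an event of probability $\epsilon(m)$, all absent sums lie in $[0,m)\cup(2n-m,2n]$ and all absent differences satisfy $|d|>n-m$, so that $X_n,Y_n$ are functions only of the four fringe sets $A,B$ on $[0,m)$ and on $(n-m,n]$. Because the $\vec\rho$-process is independent across elements, for $n\ge 2m$ the left and right fringes are independent and identically distributed irrespective of $n$; hence the fringe-determined quantity $\pi(m):=\mathbb P(\tilde Y^{(m)}>\tilde X^{(m)})$ does not depend on $n$, and $|P_n(\vec\rho)-\pi(m)|\le\epsilon(m)$ for all $n\ge 2m$. Letting $m\to\infty$ forces $\{\pi(m)\}$ to be Cauchy, and $P_n(\vec\rho)\to P(\vec\rho):=\lim_m\pi(m)$.

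\textbf{Positivity and the main obstacle.} To get $P(\vec\rho)>0$ it suffices, by the same reduction, to produce one scale $m_0$ and one fringe configuration $\mathcal F$ (a prescription of $A,B$ on $[0,m_0)\cup(n-m_0,n]$) that occurs with probability at least some $\delta>0$ independent of $n$ and that forces strictly more absent fringe differences than absent fringe sums; combined with the middle-filling estimate (which survives conditioning on $\mathcal F$, since the $\Omega(m)$ independent realizing events for a middle sum or difference can be chosen to avoid the finitely many fringe coordinates), this yields $P_n\ge\delta\,(1-\epsilon(m_0))>0$ and hence $P(\vec\rho)>0$. The hard part is constructing $\mathcal F$, because the correlation restricts which per-element statuses have positive probability: $(k\in A,\,k\in B)$ needs $\rho_1>0$, $(k\in A,\,k\notin B)$ needs $\rho_1<1$, $(k\notin A,\,k\in B)$ needs $\rho_2>0$, and $(k\notin A,\,k\notin B)$ needs $\rho_2<1$. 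When $(\rho_1,\rho_2)\in(0,1)^2$ all four statuses are available, the fringes of $A$ and $B$ may be prescribed independently, and a standard sum-dominant fringe works. The degenerate-correlation cases, where $\rho_1$ or $\rho_2$ hits $\{0,1\}$ (for instance the $B=A$ and $B=A^c$ regimes), impose relations such as $A\subseteq B$, $A\cap B=\emptyset$, or $B=A^c$ on the fringe, and one must exhibit a sum-dominant fringe built only from the admissible statuses. I expect this to require a short case analysis over which coordinates of $(\rho_1,\rho_2)$ lie in $\{0,1\}$, the excluded cases $\rho_1+\rho_2\in\{0,2\}$ being exactly those that leave too impoverished a palette to build any MSTD fringe, with a verification in each surviving case that the chosen fringe both creates the difference deficit and has positive probability bounded below in $n$.
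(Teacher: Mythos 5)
Your boundary analysis matches the paper's Proposition 2.1, and your convergence argument is sound but organized differently: the paper decomposes the MSTD event over \emph{minimal fringe tuples}, proves each tuple's limiting probability exists via monotone convergence (its Lemma 2.9), and then shows almost all MSTD pairs are rich (Proposition 2.11), whereas you approximate $P_n(\vec\rho)$ directly by a fringe-determined probability $\pi(m)$ with an error $\epsilon(m)$ uniform in $n\ge 2m$ and run a Cauchy argument. Both rest on the same exponential-decay estimates for missing middle sums and differences (the paper's Lemmas 2.2 and 2.3, including the disjoint-representation trick for small $|d|$), and your route is arguably leaner for bare convergence, though it forgoes the explicit series $P(\vec\rho)=\sum P(\vec\rho)(L,L',R,R';k)$ that the paper reuses to prove continuity.

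The genuine gap is in positivity. The step asserting that ``the $\Omega(m)$ independent realizing events for a middle sum or difference can be chosen to avoid the finitely many fringe coordinates'' fails for the transitional sums $s\in[m_0,2m_0)$ (and symmetrically near $2n$): every representation $s=a+b$ of such an $s$ has $\min(a,b)<m_0$, so every realizing event touches a fringe coordinate. Conditioned on a fringe configuration $\mathcal{F}$, the probability that such an $s$ lies in $A+B$ need not be near $1$ and can even be $0$ (take $s=m_0$ with $0\notin A\cup B$ under $\mathcal{F}$ and $m_0\notin L+L'$; then the unique representation using a non-fringe element is dead). This is precisely why the paper's Proposition 2.12 additionally imposes a block $[12,12+j]\subset A\cap B$ with $j$ large depending on $\vec\rho$ — the block fills the transitional sums — and that device interacts with the case analysis you defer: when $\rho_1=0$ the event $[12,12+j]\subset A\cap B$ has probability zero, so one must both exhibit an MSTD fringe compatible with $A\cap B=\emptyset$ (the paper supplies $L=R=\{1,2,3,5,7,8\}$ for the $(A,A^c)$ regime) and replace the middle-filling block with one built from the admissible statuses. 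Until the transitional sums are handled and at least the $\rho_1=0$ fringe is exhibited, the claim $P(\vec\rho)>0$ is not established.
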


From Monte-Carlo experiments, Martin and O'Bryant \cite{MO} conjectured that the \emph{proportion} of MSTD sets, or  $P((0.5,1,0))$, is approximately $4.5 \times 10^{-4}$; Zhao \cite{Zh2} has derived algorithms supporting a limit of this size. Since we expect MSTD sets to be rare, we are interested in finding the maximum value of the function $P$. The following theorem says that this search is not completely hopeless.

\begin{thm}\label{thmContinuous}
The function $P:[0,1]^3\to [0,1]$, defined in Theorem \ref{thmConvergence}, is continuous and thus attains its maximum at some point.
\end{thm}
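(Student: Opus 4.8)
The plan is to obtain continuity of $P$ from two ingredients: that each $P_n$ is continuous, and that $P_n \to P$ \emph{uniformly} on the cube $[0,1]^3$. Since a uniform limit of continuous functions is continuous, this gives continuity of $P$; and a continuous function on the compact set $[0,1]^3$ attains its maximum, which is the second assertion. Thus the entire statement reduces to establishing the uniform estimate $\sup_{\vec\rho\in[0,1]^3}|P_n(\vec\rho)-P(\vec\rho)|\to 0$, the pointwise convergence being supplied by Theorem \ref{thmConvergence}.

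First I would record that each $P_n$ is in fact a polynomial in $(p,\rho_1,\rho_2)$. For fixed $n$ there are only finitely many pairs $(A,B)$ of subsets of $I_n$, and under the $\vec\rho$-correlated process the coordinates $k\in I_n$ are independent, each landing in one of the four states $\{k\in A, k\in B\}$, $\{k\in A, k\notin B\}$, $\{k\notin A, k\in B\}$, $\{k\notin A, k\notin B\}$ with probabilities $p\rho_1$, $p(1-\rho_1)$, $(1-p)\rho_2$, $(1-p)(1-\rho_2)$. Hence the probability of any fixed pair is a monomial in these quantities, and $P_n(\vec\rho)=\sum_{(A,B)\ \mathrm{MSTD}}\mathbb{P}_{\vec\rho}[(A,B)]$ is a polynomial, in particular smooth on $[0,1]^3$.

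The heart of the matter is upgrading the pointwise convergence of Theorem \ref{thmConvergence} to uniform convergence, and this is where I expect the main obstacle. The naive approach of bounding $|P_n(\vec\rho)-P_n(\vec\rho')|$ by the total variation distance between the two laws of $(A,B)$ fails: those are product measures over $n+1$ independent coordinates, so the distance is only $O((n+1)|\vec\rho-\vec\rho'|)$ and degrades with $n$, giving no equicontinuity. The way around this is precisely the fringe/richness structure behind Theorem \ref{thmConvergence}: whether a \emph{rich} pair is sum dominant is decided by its fringe, a bounded number of coordinates near the endpoints of $I_n$, while the event of being rich (so that the middle sums and differences are all attained) has probability tending to $1$. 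Concretely, I would revisit that proof and check that, for $\vec\rho$ in a compact subset $K$ of the open cube $(0,1)^3$, the failure probability of richness is bounded by a quantity $\delta_k$ uniform over $K$ and over $n$, with $\delta_k\to 0$ as the fringe width $k\to\infty$. On the richness event the indicator of sum dominance depends only on the $\le 2k$ fringe coordinates, and (for $n$ large, by the convergence mechanism) its probability equals a fixed polynomial in $\vec\rho$ of degree $O(k)$, hence Lipschitz on $K$ with a constant independent of $n$. Thus $\{P_n\}$ is equicontinuous on $K$ up to an error $\delta_k$, and sending $n\to\infty$ and then $k\to\infty$ gives $P_n\to P$ uniformly on every such $K$.

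It remains to handle the degenerate boundary faces $p\in\{0,1\}$ and $\rho_1+\rho_2\in\{0,2\}$, where $P=0$ by Theorem \ref{thmConvergence} but the richness estimates break down; this is the most delicate point. As $\vec\rho$ approaches such a face, one of $A,B$ is forced to be either almost empty (when $p\to 0$ or $\rho_1+\rho_2\to 0$) or almost all of $I_n$ (when $p\to 1$ or $\rho_1+\rho_2\to 2$), and in either regime sum dominance is impossible or vanishingly likely: a nearly empty set yields too few sums, while a set that is nearly the full interval forces $A+B$ and $\pm(A-B)$ to be intervals with $|\pm(A-B)|\ge|A+B|$. I would make this quantitative, producing a continuous majorant $Q(\vec\rho)\ge\sup_n P_n(\vec\rho)$ that vanishes on the degenerate faces. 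Combined with the interior uniform convergence, this controls $\sup_{[0,1]^3}|P_n-P|$ near the boundary and yields uniform convergence on the whole cube; continuity of $P$ and attainment of its maximum then follow from the compactness of $[0,1]^3$.
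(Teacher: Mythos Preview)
Your interior argument is essentially correct and close in spirit to the paper's. Both rest on the single estimate that $\rho_3=(1-\rho_1)^2p^2+2(1-\rho_2)p(1-p)+(1-p)^2$, the probability that a fixed integer is missing from $A+B$, is bounded away from $1$ uniformly on any compact $K\subset Z^c$; hence the probability of a missing middle sum or difference at fringe level $k$ is at most $C\,q_*^{k/2}$ for some $q_*<1$ depending only on $K$. The paper packages this differently: rather than proving $P_n\to P$ uniformly on $K$, it works directly with the series $P(\vec\rho)=\sum P(\vec\rho)(L,L',R,R';k)$ over minimal fringe tuples, shows each summand is continuous, and uses the same geometric bound to control the tails uniformly. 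Your equicontinuity route and the paper's term-by-term route are essentially two ways of exploiting the same inequality. One small correction: your argument actually works on compact subsets of $Z^c$, not just of the open cube $(0,1)^3$, since $\rho_3<1$ throughout $Z^c$; in particular points with $\rho_1\in\{0,1\}$ or $\rho_2\in\{0,1\}$ but $\vec\rho\notin Z$ need no separate treatment, and the paper also observes that much of $Z$ itself (namely where $p>0$ and $\rho_1+\rho_2>0$) still has $\rho_3<1$, so the fringe argument extends there too.

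The genuine gap is your handling of the remaining boundary, $p\to 0$ and $\rho_1+\rho_2\to 0$. Your stated intuition that ``a nearly empty set yields too few sums'' is an $n$-fixed statement and does not survive the supremum over $n$: for any fixed $p>0$, however small, $|A|\sim p\,n\to\infty$, and indeed $P_n(\vec\rho)\to P(\vec\rho)>0$. Thus there is no cheap continuous majorant $Q(\vec\rho)\ge\sup_n P_n(\vec\rho)$ vanishing as $p\to 0$; exhibiting one would already entail $\lim_{p\to 0}P(\vec\rho)=0$, which is precisely the boundary continuity in question. The paper does not find such a majorant either. Instead it invokes the phase-transition result, Theorem~\ref{thmDecay}: if $p=p(N)\to 0$ with $N^{-1}=o(p(N))$ then the probability that a $(p(N),\rho_1,\rho_2)$-correlated pair in $I_N$ is MSTD tends to $0$, and from this one deduces $P(\vec\rho)\to 0$ as $p\to 0$ along any sequence. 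That theorem requires a genuine second-moment computation on the number of representations of sums (Proposition~\ref{prop_X_k}), and without it or an equivalent estimate your boundary step does not close.
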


In Section \ref{sec:p_function} we investigate $P$ and conjecture that the maximum occurs at $(0.5, 0,1)$.

The second question for the $(A,A)$ case was first conjectured by Martin and O'Bryant \cite{MO} and solved there by Hegarty and Miller \cite{HM}. The question is interesting because if $(p,\rho_1,\rho_2)$ is fixed with $p>0$ and $0< \rho_1+\rho_2<2$, then the expected sizes of $A$ and $B$ are proportional to $n$ and it is reasonable to expect a positive probability of having MSTD correlated pairs. If instead we let either $p\to 0$ or $\rho_1+\rho_2\to 0$ or $2$, then the expected size of $A$ (if $p\to 0$) or $B$ (if $\rho_1 + \rho_2 \to 0$ or $2$) is no longer proportional to $n$ and it is unclear whether or not we should have a positive probability of MSTD correlated pairs.

The case studied in \cite{HM} is $(\rho_1,\rho_2)=(1,0)$ and $p\to 0$ as $n\to\infty$. Before stating their main results, we fix some notation. Let $\mathscr{X}$ be a real-valued random variable depending on some integer parameter $N$, and let $f(N)$ be a real-valued function. We write $\mathscr{X}\sim f(N)$ if for any $\epsilon_1$, $\epsilon_2>0$ there exists $N_{\epsilon_1,\epsilon_2}>0$ such that for all $N>N_{\epsilon_1,\epsilon_2}$,
\be
\mathbb{P}(\mathscr{X} \notin [(1-\epsilon_1)f(N),(1+\epsilon_1)f(N)])\ < \ \epsilon_2.
\ee
We also use standard big-Oh, small-oh and $\Theta$ notations. We write $f(x) = O(g(x))$ if there exist constants $x_0$ and $C$ such that for all $x \geq x_0$, $|f(x)|\leq Cg(x)$. If $f(x)=O(g(x))$ and $g(x)=O(f(x))$ we say $f(x)=\Theta(g(x))$. Finally, we write $f(x)=o(g(x))$ (or $g(x)\gg f(x)$) if $\lim_{x\to\infty} f(x)/g(x)=0$. The following theorem captures the main results in \cite{HM}.

\begin{thm}\label{thmHegartyMiller} [Hegarty-Miller \cite{HM}] For $p:\mathbb{N}\to (0,1)$ such that $p(N)=o(1)$ and $N^{-1}=o(p(N))$, let each $k \in I_N := \{0, \dots, N\}$ be independently chosen to be in $A$ with probability $p(N)$. The probability that $A \subset I_N$ is MSTD tends to 0.

Let $\mathscr{S}=|A+A|$, $\mathscr{D}=|A-A|$  and $\mathscr{S}^C=2N+1-\mathscr{S},\mathscr{D}^C=2N+1-\mathscr{D}$ be the sizes of their complements.

\begin{itemize}

\item[(i)] If $p=o(N^{-1/2}),$ then $\mathscr{D}\sim 2\mathscr{S}\sim(Np)^2$.

\item[(ii)] If $p=cN^{-1/2}$ for $c\in(0,\infty)$, then for $g(x)=2(e^{-x}-(1-x))/x$
\be\mathscr{S}\ \sim\ g\left(\frac{c^2}{2}\right)N\quad \ \ \text{and}\quad\ \  \mathscr{D}\ \sim\ g(c^2)N. \ee

\item[(iii)] If $N^{-1/2}=o(p)$ then $\mathscr{S}^c\sim 2\mathscr{D}^c\sim 4/p^2$.
\end{itemize}
\end{thm}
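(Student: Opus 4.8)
The plan is to base everything on sharp first- and second-moment estimates for $\mathscr{S}=|A+A|$ and $\mathscr{D}=|A-A|$ and then to read off the MSTD statement by comparison. Since $A$ is MSTD exactly when $\mathscr{S}>\mathscr{D}$, it suffices to show that in each regime $\mathscr{D}$ exceeds $\mathscr{S}$ by a margin larger than the typical fluctuations of each. The unifying device is to compute, for each point $s$, the probability that $s\notin A+A$ (equivalently that no representing pair lies in $A$) and similarly for $A-A$; summing these over all $s$ gives $\E[\mathscr{S}^C]$ and $\E[\mathscr{D}^C]$, and concentration then promotes the expectation to the almost-sure statement encoded by $\sim$.

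First I would dispatch the sum set, where the calculation is exact. For fixed $s$ the unordered pairs $\{a,b\}$ with $a+b=s$ and $0\le a<b\le N$ are pairwise disjoint as subsets of $I_N$, with only the diagonal $a=b=s/2$ (when $s$ is even) as an extra representation, so $\mathbb{P}(s\notin A+A)=(1-p^2)^{u_s}(1-p)^{\epsilon_s}$ with $u_s=\#\{a<b:a+b=s\}\approx\tfrac12\min(s,2N-s)$ and $\epsilon_s\in\{0,1\}$. For the difference set the representing pairs $\{b,b+d\}$ at a fixed distance $d$ overlap, forming the edges of paths inside the residue classes modulo $d$, so the probability does not factor; I would evaluate it by a transfer-matrix argument, where the chance that a length-$\ell$ path of independent $\mathrm{Bernoulli}(p)$ sites has no two adjacent occupied sites grows like $\lambda^\ell$ with $\lambda$ the dominant root of $\lambda^2=(1-p)\lambda+p(1-p)$, giving $\lambda=1-p^2+O(p^3)$. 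Collecting the per-class prefactors converts the site count into the edge count and yields $\mathbb{P}(d\notin A-A)=(1-p^2)^{N-|d|}(1+o(1))$; note that for $|d|>N/2$ the pairs are genuinely disjoint and this is essentially exact. The one structural difference is that the exponent is $\tfrac12\min(s,2N-s)$ for the sum $s$ but $N-|d|$ for the difference $d$, and this factor-of-two discrepancy in the decay rates is the sole origin of every factor of $2$ in the theorem.

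The three regimes now follow by tuning the scale of $p^2N$. When $p=o(N^{-1/2})$ the set is so sparse that coincidences are negligible: the expected number of solutions of $a+b=c+d$ (resp.\ $a-b=c-d$) through distinct pairs is $O(N^3p^4)=o((Np)^2)$, so $\mathscr{S}$ equals the number of unordered pairs $\sim(Np)^2/2$ and $\mathscr{D}$ the number of signed pairs $\sim(Np)^2$, which is (i). When $p=cN^{-1/2}$ the exponents $p^2u_s$ and $p^2(N-|d|)$ are of order one, and passing to Riemann integrals (with $s=tN$ and $d=tN$) gives
\be \E[\mathscr{S}]\ \sim\ 2N\int_0^1\big(1-e^{-c^2t/2}\big)\,dt\ =\ g(c^2/2)N,\qquad \E[\mathscr{D}]\ \sim\ 2N\int_0^1\big(1-e^{-c^2t}\big)\,dt\ =\ g(c^2)N, \ee
which is (ii). When $N^{-1/2}=o(p)$ the exponentials decay quickly, so the complements are dominated by the $O(p^{-2})$ points nearest the extremes $0,2N$ (resp.\ $\pm N$); summing the two geometric tails yields $\E[\mathscr{S}^C]\sim 4/p^2$ and $\E[\mathscr{D}^C]\sim 2/p^2$, which is (iii). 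Since $g(x)=2+2(e^{-x}-1)/x$ is strictly increasing on $(0,\infty)$, in every case the typical $\mathscr{D}$ strictly exceeds the typical $\mathscr{S}$, which gives the vanishing of the MSTD probability.

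The genuinely hard part is upgrading each expectation to a concentration statement, since the indicators $\mathbf{1}[s\notin A+A]$ are dependent whenever two points share a representing element and the dependency is denser still for differences because of the chain structure. I would bound $\mathrm{Var}(\mathscr{S}^C)$ and $\mathrm{Var}(\mathscr{D}^C)$ by counting such overlaps through a second-moment computation and then apply Chebyshev; an Azuma bound on the element-exposure filtration is a backup, although the per-element Lipschitz constant of order $|A|$ makes it delicate near $p\asymp N^{-1/2}$. The tightest demand is in regime (iii), where the gap $\mathscr{D}-\mathscr{S}=\mathscr{S}^C-\mathscr{D}^C\sim 2/p^2$ is only of the same order as $\E[\mathscr{S}^C]$, so one must show the relative fluctuations of the complements are $o(1)$; establishing this variance control uniformly across the regime boundaries is where essentially all of the effort lies.
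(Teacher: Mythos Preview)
This theorem is quoted from \cite{HM} rather than proved in the present paper; the paper instead proves the generalisation Theorem~\ref{thmDecay} by adapting the Hegarty--Miller method, so the relevant comparison is between your sketch and that argument.

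Your outline is sound but the route is genuinely different. Following \cite{HM}, the paper introduces for each $k\ge 1$ the random variable $X_k$ counting unordered $k$-tuples of pairs with a common sum, proves $\E[X_k]\sim \tfrac{2}{(k+1)!}(\hat p/2)^k N^{k+1}$ together with concentration of each $X_k$ via a second-moment bound on $\Delta=\sum_{\alpha\sim\beta}\mathbb{P}(Y_\alpha\cap Y_\beta)$, and then recovers $\mathscr{S}$ in regime~(ii) through the alternating identity $\mathscr{S}\sim\sum_{k\ge 1}(-1)^{k-1}X_k$; only in regime~(iii) does it compute missing probabilities directly, and there only in expectation (indeed Theorem~\ref{thmDecay}(iii) states merely $\E[\mathscr{S}^c]\sim 2\E[\mathscr{D}^c]$, and the concentration you flag is listed as an open problem in \S6). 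You instead compute $\mathbb{P}(s\notin A+A)$ and $\mathbb{P}(d\notin A-A)$ pointwise and sum or integrate, handling all three regimes with one device; your Riemann integral for~(ii) is more transparent than the alternating series, and your transfer-matrix treatment of the overlapping difference pairs is a refinement the paper avoids by thinning to disjoint pairs (Lemma~\ref{lem_p3_difference}) at the cost of a constant. What you give up is ready-made concentration: once every $X_k$ is concentrated the inclusion--exclusion sum for $\mathscr{S}$ inherits it automatically, whereas your route needs a direct bound on $\mathrm{Var}(\mathscr{S})$ through the covariances $\mathrm{Cov}(\mathbf{1}[s\in A+A],\mathbf{1}[s'\in A+A])$, which you correctly identify as the crux but do not carry out. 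Both programmes are viable; the $X_k$ machinery front-loads the second-moment work into one reusable lemma, while yours trades that for a more direct and unified first-moment picture.
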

This theorem identifies $N^{-1/2}$ as the \emph{threshold function} where the phase transition happens. The ratio between sizes of the sum set and difference set behaves differently for $p$ with decay on opposite sides of this threshold. Below the threshold the ratio is almost surely $2+o(1)$ while above it is almost surely $1+o(1)$.

Building on their methods, we extend their results to our more general setting.

\begin{thm}\label{thmDecay} For fixed $\rho_1,\rho_2\in [0,1]$, $0<\rho_1+\rho_2<2$ and a function $p:\mathbb{N}\to (0,1)$ such that $p(N)=o(1)$ and $N^{-1}=o(p(N))$, the probability that $(A,B) \subset I_N$ is an MSTD $(p(N),\rho_1,\rho_2)$-correlated pair tends to 0.

In particular, let $\hat{p}=p^2(2\rho_1-\rho_1^2)+2p(1-p)\rho_2$ where $p = p(N)$. Let $\mathscr{S}=|A+B|$ and $\mathscr{D}=|\pm(A-B)|$ and $\mathscr{S}^C=2N+1-\mathscr{S},\mathscr{D}^C=2n-1-\mathscr{D}$ be the sizes of their complements.

\begin{itemize}
\item[(i)] If $\hat{p}=o(N^{-1})$, then $\mathscr{D}\sim 2\mathscr{S}\sim N^2\hat{p}$.

\item[(ii)] If $\hat{p}=cN^{-1}$ for some $c\in(0,\infty)$. Let $g(x)=2(e^{-x}-(1-x))/x$, then
 \be \mathscr{S}\ \sim\ g\left(\frac{c}{2}\right)N\quad\ \ \text{and}\quad\ \ \mathscr{D}\ \sim\ g(c)N.\ee

\item[(iii)] If $N^{-1}=o(\hat{p})$, then $\mathbb{E}(\mathscr{S}^c)\sim \mathbb{E}(2\mathscr{D}^c)\sim  4/\hat{p}$.
\end{itemize}
\end{thm}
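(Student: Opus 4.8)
The plan is to reduce the problem to the single-set setting of Hegarty--Miller (Theorem \ref{thmHegartyMiller}) by identifying the correct ``effective density.'' The key observation is that $\hat p$ is exactly the probability that a fixed unordered pair of distinct positions $\{i,j\}\subset I_N$ contributes a representation. Indeed, $i+j$ lies in $A+B$ through the pair $\{i,j\}$ precisely when $(i\in A,\ j\in B)$ or $(j\in A,\ i\in B)$, and since the positions $i$ and $j$ are independent, inclusion--exclusion gives probability $2pq-p^2\rho_1^2=\hat p$, where $q:=\mathbb P(k\in B)=p\rho_1+(1-p)\rho_2$. The same condition, hence the same probability $\hat p$, governs whether $i-j$ lies in $\pm(A-B)$. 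Thus $\hat p$ plays precisely the role that $p^2$ plays in the $(A,A)$ problem, and the threshold $p=N^{-1/2}$ of Theorem \ref{thmHegartyMiller} becomes the threshold $\hat p=N^{-1}$. As a consistency check, the degenerate case $(\rho_1,\rho_2)=(1,0)$ gives $\hat p=p^2$, so $N^2\hat p=(Np)^2$ and our regimes recover theirs verbatim; note also that when $\rho_2>0$ one always has $\hat p\gg N^{-1}$, so only regime (iii) is reachable in that range.

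Write $\mathscr S=\sum_{s=0}^{2N}\mathbb 1[s\in A+B]$ and, using the symmetry of $\pm(A-B)$, $\mathscr D=\mathbb 1[0\in\pm(A-B)]+2\sum_{d=1}^{N}\mathbb 1[d\in\pm(A-B)]$. For a fixed sum $s$ the representing pairs $\{i,s-i\}$ are pairwise disjoint, so the events are independent and $\mathbb P(s\notin A+B)=(1-\hat p)^{r(s)}\cdot(\text{diagonal factor})$, where $r(s)=\#\{\text{off-diagonal pairs}\}\approx\tfrac12\min(s,2N-s)$; the diagonal factor (present when $s$ is even, equal to $1-p\rho_1$) and the analogous $d=0$ term are lower order and may be discarded since $p\to0$ while $Np\to\infty$. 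Summing these geometric expressions, and replacing sums by integrals in the critical case, recovers all three asymptotics: in regime (i) collisions are negligible, so $\mathscr S$ (resp.\ $\mathscr D$) is essentially the number of contributing pairs, giving $\mathbb E[\mathscr S]\sim\tfrac12 N^2\hat p$ and $\mathbb E[\mathscr D]\sim N^2\hat p$; in regime (ii) the Riemann sum $\sum_s(1-\hat p)^{r(s)}$ produces exactly the function $g$, yielding $\mathbb E[\mathscr S]\sim g(c/2)N$ and $\mathbb E[\mathscr D]\sim g(c)N$; and in regime (iii) the missing sums concentrate near $s=0,2N$ and the missing differences near $d=\pm N$, giving $\mathbb E[\mathscr S^c]\sim 4/\hat p$ and $\mathbb E[\mathscr D^c]\sim 2/\hat p$.

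To upgrade these expectations to the concentration statements ``$\sim$'' in (i) and (ii), and to control $\mathscr S^c,\mathscr D^c$ in (iii), I would run the second moment method as in Hegarty--Miller. The covariance $\operatorname{Cov}(\mathbb 1[s\in A+B],\mathbb 1[s'\in A+B])$ vanishes unless the representing pairs of $s$ and $s'$ share a position, and when they do the relevant joint probabilities are of the same order in $\hat p$ as the corresponding products in the single-set case; summing over the $O(N)$ shared configurations shows $\operatorname{Var}(\mathscr S)=o(\mathbb E[\mathscr S]^2)$ (and likewise for $\mathscr D$ and for the complements), after which Chebyshev's inequality gives the claimed concentration. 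In each regime the limiting values then satisfy $\mathscr D>\mathscr S$ with probability tending to $1$: in (i) because $\mathscr D\sim2\mathscr S$, in (ii) because $g$ is strictly increasing (one checks $(h/x)'>0$ for $h(x)=e^{-x}-1+x$, using $1+x<e^x$) so $g(c)>g(c/2)$, and in (iii) because $\mathscr S^c\sim2\mathscr D^c$ forces $\mathscr S<\mathscr D$. Hence $\mathbb P(\mathscr S>\mathscr D)\to0$, so $(A,B)$ is MSTD with probability tending to $0$.

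The main obstacle is the second moment estimate. Unlike the single-set case, each position carries the correlated pair $(\mathbb 1[k\in A],\mathbb 1[k\in B])$, so the indicators $\mathbb 1[\{i,j\}\text{ contributes}]$ are built from a richer four-state variable and their pairwise joint distributions do not factor as cleanly as the Bernoulli products $\alpha_i\alpha_j$ of Hegarty--Miller. The work therefore lies in verifying that, after conditioning on the state of the single shared position, the correlations decay at the same rate in $\hat p$, so that the variance bounds, and with them the phase transition at $\hat p=N^{-1}$, survive the passage from one set to a correlated pair.
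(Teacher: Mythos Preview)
Your overall strategy—identify $\hat p$ as the effective pair-probability and transfer the Hegarty--Miller analysis—is exactly right, and your treatment of $\mathscr S$ and $\mathscr S^c$ via $\mathbb P(s\notin A+B)=(1-\hat p)^{r(s)}$ coincides with the paper's Lemma~\ref{lem_p3_p4} (indeed $1-\hat p=\rho_3$). For part (iii) your computation is essentially the paper's. For parts (i) and (ii), however, the paper does \emph{not} work with $\mathbb P(s\in A+B)$ directly; instead it follows Hegarty--Miller by introducing the $k$-tuple counts $X_k=|A_k|$ (and analogously $A'_k$ for differences), proving $\mathbb E[X_k]\sim\frac{2}{(k+1)!}(\hat p/2)^kN^{k+1}$ with concentration (Proposition~\ref{prop_X_k}), and recovering $\mathscr S$ from the alternating identity $\mathscr S\sim\sum_k(-1)^{k-1}X_k$. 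The key technical step is Lemma~\ref{lem_p_hat'}, which shows $\hat p^2/p=O(\mathbb P(E))$ for the three-element overlap event $E$; this is what makes the variance bound $\Delta=o(\mathbb E[X_k]^2)$ go through in the correlated setting.

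Two gaps in your route deserve attention. First, for the difference set the representing pairs $\{i,i-d\}$ are \emph{not} disjoint (position $i$ occurs in both $\{i,i-d\}$ and $\{i+d,i\}$), so there is no exact product formula for $\mathbb P(d\notin\pm(A-B))$; the paper's Lemma~\ref{lem_p3_difference} gives only an upper bound for precisely this reason, and the asymptotics for $\mathscr D$ in regimes (i) and (ii) come instead from the $A'_k$ machinery. Your Riemann-sum derivation of $g(c)$ for $\mathscr D$ therefore rests on an unjustified independence assumption. Second, your variance sketch asserts that $\operatorname{Cov}(\mathbb 1[s\in A+B],\mathbb 1[s'\in A+B])$ vanishes unless the representing pairs share a position—but for almost every pair $s,s'$ some representing pairs \emph{do} share a position (e.g.\ position $0$), so these covariances are generically nonzero and of order comparable to the indicators themselves in regime (ii). Controlling $\operatorname{Var}(\mathscr S)$ directly is thus more delicate than you suggest; the paper sidesteps this by bounding $\operatorname{Var}(X_k)$, where the relevant overlaps are between $k$-tuples and are genuinely sparse.

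Finally, note that in regime (iii) the paper proves only the \emph{expectation} statement $\mathbb E[\mathscr S^c]\sim 2\mathbb E[\mathscr D^c]$ and explicitly lists the concentration $\mathscr S^c\sim2\mathscr D^c$ as an open problem; your inference that this ``forces $\mathscr S<\mathscr D$'' with high probability therefore goes beyond what is actually established.
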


Finally, we are able to answer the first part of the third question.

\begin{thm}\label{thmMinimalSize} The minimal sizes of MSTD pairs are $(3,5)$ and $(4,4)$. Examples of MSTD pairs with such sizes are
\be A\ =\ \{0,1,4,6,7\},\quad B\ =\ \{2,3,5\}\nonumber\ee
\be A\ =\ \{0,1,4,6\},\quad B\ =\ \{0,2,5,6\}.\ee
\end{thm}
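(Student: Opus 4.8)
The plan is to split the statement into an \emph{achievability} direction (exhibit MSTD pairs of sizes $(3,5)$ and $(4,4)$) and an \emph{optimality} direction (no MSTD pair has a strictly smaller size, and every MSTD pair dominates one of these in the componentwise order). The two displayed pairs settle achievability after a direct computation: for $A=\{0,1,4,6,7\}$, $B=\{2,3,5\}$ one checks $A+B=\{2,\dots,12\}$ while $\pm(A-B)=\{\pm1,\dots,\pm5\}$, so $11=|A+B|>|\pm(A-B)|=10$; for $A=\{0,1,4,6\}$, $B=\{0,2,5,6\}$ one gets $|A+B|=13>11=|\pm(A-B)|$. Throughout I use the symmetry $A+B=B+A$ and $\pm(A-B)=\pm(B-A)$, which makes the MSTD property invariant under swapping $A$ and $B$; hence I may assume $|A|\le|B|$ and record sizes as unordered pairs. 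I also use the affine invariance: applying $x\mapsto\lambda x+t$ with $\lambda\ne 0$ simultaneously to both sets preserves $|A+B|$ and $|\pm(A-B)|$, so I may normalize $\min(A\cup B)=0$ and the $\gcd$ of all differences to be $1$.

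For the optimality direction I first dispose of $|A|\le 2$ by a clean impossibility argument valid for \emph{every} $|B|$. If $|A|=1$, then $|A+B|=|B|$ while $A-B$ alone already has $|B|$ elements, so $|\pm(A-B)|\ge|B|=|A+B|$. If $|A|=2$, writing $A=\{a_1,a_2\}$ with $d=a_2-a_1$, both $A+B$ and $A-B$ have the form $S\cup(S+d)$ for a set $S$ that is a translate, respectively a reflected translate, of $B$; since reflection and translation preserve the count of pairs at distance $d$, we get $|A+B|=|A-B|$, and combined with $|\pm(A-B)|\ge|A-B|$ this rules out MSTD. Thus no MSTD pair has $|A|\le 2$.

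The crux is ruling out $|A|=3$ with $|B|\in\{3,4\}$. A useful first reduction is the identity of additive energies: setting $E_{+}=\#\{(a,a',b,b')\in A^2\times B^2: a+b=a'+b'\}$ and $E_{-}=\#\{(a,a',b,b')\in A^2\times B^2: a-b=a'-b'\}$, the swap $(a,a',b,b')\mapsto(a,a',b',b)$ shows $E_{+}=E_{-}$. Since $|A+B|=|A||B|$ is equivalent to $E_{+}=|A||B|$ (all sum-multiplicities at most one), and likewise $|A-B|=|A||B|$ is equivalent to $E_{-}=|A||B|$, the extremal cases coincide: if the sumset is as large as possible then so is the difference set, whence $|\pm(A-B)|\ge|A-B|=|A||B|\ge|A+B|$ and no such pair is MSTD. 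This kills $|A+B|=9$ in the $(3,3)$ case and $|A+B|=12$ in the $(3,4)$ case. For the remaining submaximal configurations I would pass to a finite search: after the affine normalization, a fringe analysis in the spirit of Hegarty \cite{He} bounds the diameter $\max(A\cup B)$ by an explicit constant, after which one checks exhaustively that every normalized pair with $|A|=3$, $|B|\le 4$ satisfies $|A+B|\le|\pm(A-B)|$.

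The main obstacle is precisely this last step: establishing a rigorous, explicit diameter bound so that the verification is genuinely finite. The difficulty is that the energy identity controls only the extremal case, while for submaximal sumsets one must argue that spreading the sets out forces the outer ``fringe'' of $\pm(A-B)$ to be at least as rich as that of $A+B$; quantifying this to bound the diameter is the delicate part, and once done the residual check is routine. Finally I would assemble the pieces: any MSTD pair with $|A|\le|B|$ has $|A|\ge 3$, and if $|A|=3$ then $|B|\ge 5$, so its size dominates $(3,5)$, while if $|A|\ge 4$ then $|B|\ge 4$, so its size dominates $(4,4)$. Since $(3,5)$ and $(4,4)$ are achievable and incomparable in the componentwise order, they---together with the mirror image $(5,3)$---are exactly the minimal sizes of MSTD pairs.
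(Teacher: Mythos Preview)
Your achievability check and the treatment of $|A|\le 2$ are fine; the $|A|=2$ argument via $|(S)\cup(S+d)|$ depending only on the number of $d$-gaps in $S$ is clean and correct.

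The genuine gap is exactly where you say it is, but the point is that it need not exist: you are under-exploiting your own energy identity. You use $E_+=E_-$ only to kill the collision-free case $|A+B|=|A||B|$, and then retreat to an unproven diameter bound plus a computer search for the remaining submaximal configurations. The paper's approach shows that $E_+=E_-$ (equivalently, the bijection between unordered pairs with equal sum and unordered pairs with equal difference) already handles \emph{every} configuration in which no sum has multiplicity $\ge 3$. Writing $r_+(s)$ for the number of $(a,b)\in A\times B$ with $a+b=s$, one has $|A+B|=|A||B|-\sum_s(r_+(s)-1)$, and if every $r_+(s)\le 2$ then $\sum_s(r_+(s)-1)=\sum_s\binom{r_+(s)}{2}=\tfrac12(E_+-|A||B|)$; on the other hand $\sum_d(r_-(d)-1)\le\sum_d\binom{r_-(d)}{2}=\tfrac12(E_--|A||B|)$ always. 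Combining gives $|A+B|\le|A-B|\le|\pm(A-B)|$.

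Hence any MSTD pair must contain a \emph{triple} sum collision: $a_1<a_2<a_3$ in $A$ and $b_1<b_2<b_3$ in $B$ with $a_1+b_3=a_2+b_2=a_3+b_1$. For $|A|=3$ this consumes all of $A$ and imposes two linear relations; after translating so the smallest relevant element is $0$, the $(3,3)$ case collapses to two short subcases and the $(3,4)$ case to a few more (using additionally that $|Y_d|\le 2$ forces $A-B$ to be symmetric, hence $a_1+b_4=a_3+b_1$), all checkable by hand. No diameter bound or exhaustive search is needed. Your proposal is not wrong in spirit, but the step you flag as ``delicate'' is in fact bypassed entirely once you push the energy identity to the multiplicity-$\le 2$ conclusion rather than just the multiplicity-$\le 1$ one.
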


We attack these three questions in their listed order. In Sections \S\ref{sec:positive percentage} and \S\ref{sec:p_function} we address the first question by proving Theorem \ref{thmConvergence} and Theorem \ref{thmContinuous}. We next investigate the decay of $p$ in \S\ref{sec:decay} and prove the result about minimal MSTD pairs in Section \S\ref{sec:smallest pair}. We conclude with a list of questions for future research.

\section{Positive percentage of MSTD correlated pairs}\label{sec:positive percentage}

In this section we generalize the arguments of \cite{MO} and \cite{Zh2} to the case of $(p, \rho_1, \rho_2)$-pairs $(A,B)$. Let $I_n := \{0, \dots, n\}$; we also write $[0, n]$ for this interval. Additionally, $n - A = \{n - a: a \in A\}$; we frequently enclose it in parentheses when performing unions or intersections to clearly identity the sets.
We first prove an easy yet very helpful result.
\begin{prop}\label{prop_0}
If $p\in\{0,1\}$ or $\rho_1+\rho_2\in\{0,2\}$ then there is no $\vec{\rho}-$correlated MSTD pair in $I_n$.
\end{prop}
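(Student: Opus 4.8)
The plan is to determine, in each degenerate case, exactly which pairs $(A,B)$ occur with positive probability, and then to verify directly that none of them can satisfy $|A+B| > |\pm(A-B)|$. There are four cases to dispose of, and they split naturally into those forcing an empty set and those forcing a full set.

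First I would handle the cases in which one of the two sets is forced to be empty. If $p=0$ then $\mathbb{P}(k\in A)=0$ for every $k\in I_n$, so $A=\emptyset$; if $\rho_1+\rho_2=0$ then $\rho_1=\rho_2=0$ and $\mathbb{P}(k\in B)=0$ for every $k$, so $B=\emptyset$. In either case both $A+B$ and $\pm(A-B)$ are empty, whence $|A+B|=0=|\pm(A-B)|$ and the pair is not sum dominant. This is the trivial part.

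The remaining cases force one set to be the full interval: $p=1$ gives $A=I_n$, while $\rho_1+\rho_2=2$ gives $\rho_1=\rho_2=1$ and hence $B=I_n$. Since $A+B=B+A$ and $\pm(A-B)=\pm(B-A)$, the quantities $|A+B|$ and $|\pm(A-B)|$ are symmetric in $A$ and $B$, so it suffices to treat the representative case $B=I_n$ with $A\subseteq I_n$ arbitrary (the subcase $A=\emptyset$ being already covered). Writing $a_{\min}=\min A$ and $a_{\max}=\max A$, the key computation is to show that the relevant sets are single intervals: since $\{a,a+1,\dots,a+n\}\subseteq A+I_n$ for every $a\in A$ and $a_{\max}-a_{\min}\le n$, one gets $A+I_n=[a_{\min},\,a_{\max}+n]$, so $|A+I_n|=a_{\max}+n-a_{\min}+1$. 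Likewise $A-I_n=[a_{\min}-n,\,a_{\max}]$ and $I_n-A=-(A-I_n)=[-a_{\max},\,n-a_{\min}]$.

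The heart of the argument is then to observe that both $A-I_n$ and $I_n-A$ contain $0$ (as $a_{\min}\le n$ and $a_{\max}\ge 0$), so their union has no gap and equals the symmetric interval $[-M,M]$ with $M=\max(a_{\max},\,n-a_{\min})$; thus $|\pm(A-I_n)|=2M+1$. The desired inequality $|A+I_n|\le|\pm(A-I_n)|$ now reduces to
\be a_{\max}+(n-a_{\min}) \ \le\ 2\max(a_{\max},\,n-a_{\min}), \ee
which is the elementary bound $x+y\le 2\max(x,y)$ applied to $x=a_{\max}$ and $y=n-a_{\min}$. This always holds (with equality precisely when $a_{\max}=n-a_{\min}$), so the pair is never sum dominant. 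I expect the only real care needed is in confirming that $\pm(A-I_n)$ is a single interval rather than two disjoint pieces — this is exactly what the observation that both halves contain $0$ secures — after which everything collapses to the one-line inequality above.
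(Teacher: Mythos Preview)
Your proof is correct and follows essentially the same approach as the paper: identify that one of $A,B$ is forced to be $\emptyset$ or $I_n$, dispose of the empty case trivially, and in the full-interval case compute $|A+B|=a_{\max}+(n-a_{\min})+1$ and $|\pm(A-B)|=2\max(a_{\max},n-a_{\min})+1$, reducing everything to the inequality $x+y\le 2\max(x,y)$. The paper does exactly this (with the roles of $A$ and $B$ swapped), though you are a bit more explicit in verifying that $\pm(A-I_n)$ is a single interval via the observation that both pieces contain $0$.
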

\begin{proof}
It is easy to see that if $p=0$ or $1$, the set $A$ is, respectively, the empty set or $I_n$.  In the first case, $|A+B|=|A-B|=0$ for any set $B$. In the latter case, if $l$ and $s$ are the largest and smallest elements of $B$ $(0\leq s\leq l\leq n)$, then $A+B=\{s, s+1, \dots, n+l\}$ and $\pm(A-B)=\{-d, -(d-1), \dots, d-1, d\}$ where $d=\max\{n-s,l\}$. Hence $|A+B|=(n+l)-s+1=(n-s)+l+1\leq 2d+1=|\pm(A-B)|$. In either case, there is no MSTD correlated pair (for any $n$). Similarly, if $\rho_1+\rho_2\in\{0,2\}$ or equivalently $(\rho_1,\rho_2)\in\{(0,0);(1,1)\}$, $B=\emptyset$ or $I_n$, and there is no MSTD pair either.
\end{proof}

Therefore  from now on we assume $0<p<1$ and $0<\rho_1+\rho_2<2$ unless stated otherwise.

We now establish two useful lemmas which are analogous to Lemmas 7 and 11 in \cite{MO}. Their proofs follow from Bayes's formula, and for completeness are given in Appendix \ref{sec:lem_p3_p4}.

\begin{lemma}\label{lem_p3_p4} Let $(A,B)$ be a $(p,\rho_1,\rho_2)$-correlated pair. For any $k\in [0, 2n]$, the probability $k$ does not belong to the sum set $A+B$ is
\be
\twocase{\mathbb{P}(k\notin A+B) \ =\ }{\rho_3^{\min\{\frac{k+1}{2},\frac{2n-k+1}{2}\}}}{if $k$ is odd}{\rho_4\rho_3^{\min\{\frac{k}{2},\frac{2n-k}{2}\}}}{if $k$ is even,}
\ee
where \be\rho_3\ =\ (1-\rho_1)^2p^2+2(1-\rho_2)p(1-p)+(1-p)^2\ \ \ {\rm and} \ \ \ \rho_4\ =\ (1-\rho_1)p+(1-p).\ee
\end{lemma}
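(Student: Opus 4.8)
The plan is to compute $\mathbb{P}(k \notin A+B)$ directly by expressing the event as an intersection of independent single-element events and then multiplying their probabilities. The key observation is that $k \in A+B$ precisely when there exists a pair $(a,b)$ with $a \in A$, $b \in B$, and $a + b = k$. For a fixed target $k$, I would enumerate all unordered index pairs $\{i, k-i\}$ with both $i$ and $k-i$ lying in $I_n = \{0,\dots,n\}$; the event $k \notin A+B$ is the conjunction, over all such pairs, of the event that this particular pair fails to produce the sum $k$. Since the choices at distinct indices are mutually independent in the $\vec\rho$-correlated model (the dependence is only \emph{within} an index, between membership in $A$ and membership in $B$), these per-pair failure events are independent, so the total probability factors as a product.

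The heart of the calculation is therefore the per-pair failure probability, which splits into two cases. When $i \neq k - i$, the pair contributes to the sum $k$ if either ($i \in A$ and $k-i \in B$) or ($k-i \in A$ and $i \in B$); I would compute the probability that \emph{neither} of these occurs. By the law of total probability conditioned on the membership pattern at indices $i$ and $k-i$, and using that the two indices are independent, this failure probability should work out to exactly $\rho_3 = (1-\rho_1)^2 p^2 + 2(1-\rho_2)p(1-p) + (1-p)^2$. The three terms correspond to the complementary cases of how many of the two indices lie in $A$ (two, one, or zero), weighted by the conditional probability that the relevant index avoids $B$. When $k$ is even and $i = k/2$, the single ``diagonal'' index contributes $k$ only if $k/2 \in A \cap B$, so the failure probability at that index is $\mathbb{P}(k/2 \notin A \text{ or } k/2 \notin B) = 1 - \mathbb{P}(k/2 \in A)\mathbb{P}(k/2 \in B \mid k/2 \in A) = 1 - p\rho_1 = (1-\rho_1)p + (1-p) = \rho_4$.

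Finally I would count the number of distinct off-diagonal pairs, which determines the exponent on $\rho_3$. For a given $k \in [0,2n]$, the admissible indices $i$ satisfy $\max(0, k-n) \le i \le \min(n,k)$, and pairing $i$ with $k-i$ shows that the number of off-diagonal pairs is $\min\{\lfloor (k+1)/2\rfloor, \lfloor(2n-k+1)/2\rfloor\}$ when $k$ is odd, and $\min\{k/2, (2n-k)/2\}$ when $k$ is even (with one extra diagonal index in the even case contributing the $\rho_4$ factor). Multiplying $\rho_3$ raised to the number of off-diagonal pairs by $\rho_4$ in the even case gives the claimed formula. The main obstacle will be the careful bookkeeping of these counts near the endpoints $k=0$ and $k=2n$ and at the boundary where the range of $i$ switches from being limited by $0$ to being limited by $n$; the symmetric $\min$ in the statement is exactly what handles both regimes uniformly, so verifying that the count agrees with $\min\{(k+1)/2, (2n-k+1)/2\}$ (odd) and $\min\{k/2,(2n-k)/2\}$ (even) in all cases is where one must be most attentive.
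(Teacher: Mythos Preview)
Your proposal is correct and follows essentially the same approach as the paper's proof: both express $\{k\notin A+B\}$ as the intersection over unordered pairs $\{i,k-i\}$ of the event that neither ordering produces an $(A,B)$-pair, use independence across distinct indices, and compute the per-pair failure probability by conditioning on which of $i,k-i$ lie in $A$ (yielding $\rho_3$ for off-diagonal pairs and $\rho_4$ for the diagonal index when $k$ is even). Your explicit remark that the dependence in the model is only \emph{within} an index is exactly the justification the paper invokes when it notes that no element is repeated in two different pairs, so the factorization is valid.
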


\begin{lemma}\label{lem_p3_difference} Let $(A,B)$ be a $(p,\rho_1,\rho_2)$-correlated pair. For any $k\in [-n, n]$,
\be \twocase{\mathbb{P}(k\notin (A-B)\cup (B-A))\ \leq\ }{\rho_3^{n/3}}{if $1\leq k\leq n/2$}{\rho_3^{n-k}}{if $n/2<k\leq n$,}\ee where $\rho_3$ is defined in Lemma \ref{lem_p3_p4}.
\end{lemma}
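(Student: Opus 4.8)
The plan is to mirror the reasoning behind Lemma~\ref{lem_p3_p4}, reducing the claim to counting pairwise disjoint ``distance-$k$'' pairs of positions. First I would reduce to the range $1\le k\le n$: since $\pm(A-B)$ is invariant under $d\mapsto -d$, we have $\mathbb{P}(k\notin\pm(A-B))=\mathbb{P}(-k\notin\pm(A-B))$, so it suffices to treat positive $k$. Writing $X_i=\mathbf{1}[i\in A]$ and $Y_i=\mathbf{1}[i\in B]$, the vectors $(X_i,Y_i)$ are independent across $i\in I_n$, each with the single-site law coming from the definition of a correlated pair.

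The key local computation is the following. Fix positions $i$ and $i+k$; this pair produces $k\in A-B$ exactly when $X_{i+k}=Y_i=1$, and produces $k\in B-A$ exactly when $X_i=Y_{i+k}=1$. By inclusion--exclusion together with independence of the two sites,
\be
\mathbb{P}(\{i,i+k\}\text{ witnesses }k)\ =\ 2p\big(\rho_1 p+\rho_2(1-p)\big)-p^2\rho_1^2,
\ee
so the complementary non-witnessing probability is $1-2p(\rho_1 p+\rho_2(1-p))+p^2\rho_1^2$, which one checks equals exactly the constant $\rho_3$ of Lemma~\ref{lem_p3_p4}. This coincidence is structural: the non-witnessing event $\{X_iY_{i+k}=0,\ X_{i+k}Y_i=0\}$ has the same form as the sum-set event for a symmetric pair, so no new computation is really needed.

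Next I would choose a family of pairwise disjoint distance-$k$ pairs $\{i,i+k\}$. Since the event $\{k\notin\pm(A-B)\}$ forces \emph{every} such pair to fail to witness, and disjoint pairs involve disjoint (hence independent) sites, we obtain $\mathbb{P}(k\notin\pm(A-B))\le\rho_3^{M}$, where $M$ is the number of chosen pairs and $0\le\rho_3\le1$. The candidate pairs $\{i,i+k\}$ with $0\le i\le n-k$ split into $k$ paths, one per residue class modulo $k$, and a union of paths on $n+1$ vertices has a matching of size $M\ge(n+1-k)/2$.

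The main obstacle is that this single bound gives only about $n/4$ near $k=n/2$, falling short of the claimed $n/3$. I would resolve this with a second, complementary estimate: when $k\le n/2$ each of the $k$ residue classes contains at least two vertices, so each of the $k$ paths contributes at least one matched pair, whence $M\ge k$. Combining the two bounds gives $M\ge\max\{k,(n+1-k)/2\}\ge(n+1)/3>n/3$, and since $\rho_3\le1$ this yields $\mathbb{P}(k\notin\pm(A-B))\le\rho_3^{n/3}$ for $1\le k\le n/2$. For $n/2<k\le n$ the situation is simpler: the lower endpoints $\{0,\dots,n-k\}$ and the upper endpoints $\{k,\dots,n\}$ occupy disjoint ranges because $n-k<k$, so all $n-k+1$ pairs are automatically disjoint, giving $M=n-k+1$ and hence the bound $\rho_3^{n-k}$.
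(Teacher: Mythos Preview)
Your argument is correct and follows the same overall architecture as the paper's proof: reduce to disjoint distance-$k$ pairs $\{i,i+k\}$, verify that each such pair fails to witness $k\in\pm(A-B)$ with probability exactly $\rho_3$, and then exponentiate. The local computation and the case $n/2<k\le n$ match the paper essentially verbatim.

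The only genuine difference is in how you count disjoint pairs when $1\le k\le n/2$. The paper constructs an explicit index set $J=\{j:0<j<n-k,\ \lfloor j/k\rfloor\text{ even}\}$ (alternating blocks of length $k$), checks that $j\in J\Rightarrow j+k\notin J$, and reads off $|J|\ge n/3$ directly. You instead view the pairs as edges of $k$ disjoint paths on $n+1$ total vertices and extract a maximum matching, obtaining the two lower bounds $M\ge(n+1-k)/2$ and $M\ge k$, then combine them via $\max\{k,(n+1-k)/2\}\ge(n+1)/3$. Both routes are short; your matching argument is a bit more conceptual and avoids having to check the block construction by hand, while the paper's explicit $J$ makes the bound immediate once stated. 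Either way the conclusion $\rho_3^{n/3}$ follows.
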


\begin{remark} It is easy to check that when $(\rho_1,\rho_2)=(1,0)$, $\rho_3=1-p^2$ and $\rho_4=1-p$; note this is consistent with the results in \cite{MO} and \cite{Zh2}.
\end{remark}

We next give definitions of MSTD fringe tuples and rich MSTD pairs, analogous to Definitions 2.1 and 2.4 in \cite{Zh2}. As we will see, these definitions characterize the behavior of almost all MSTD pairs in the limit.

\begin{defi}[MSTD fringe tuple]\label{def_fringe} For $k < n/2$ and subsets $L, L', R, R'$  of $[0,k]$, we say $(L, L', R, R'; k)$ is an \emph{MSTD fringe tuple} if
\be |(L + L') \cap [0,k]| + |(R+R') \cap [0,k]|\ >\ 2|((L + R') \cap [0,k]) \cup ((L'+R) \cap [0,k])|.
\ee
\end{defi}

\begin{defi}[Rich MSTD pair]\label{def_rich} We call a pair of subsets $(A, B) \subset S$ a \emph{rich MSTD pair with fringe tuple $(L, L', R, R'; k)$} if
\begin{itemize}
\item[(i)] $A \cap [0,k] = L,  \hspace{5pt} B \cap [0, k] = L'$,
\item[(ii)] $(n-A) \cap [0, k] = R, \hspace{5pt}  (n-B) \cap [0,k] = R'$,
\item[(iii)] $[k+1, 2n-k-1] \subseteq A+B$.
\end{itemize}
\noindent The smallest such $k$ is called the order of this rich pair.
\end{defi}
Any pair $(A,B)$ satisfying (i) and (ii) is said to have \emph{fringe profile given by} $(L,L',R,R';k)$. These two conditions and Definition 2.4 imply that $A+B$ has more ``extreme'' elements than $\pm (A-B)$ (here ``extreme'' refers to the smallest $k$ elements and the largest $k$ elements of $I + I$ and $I - I$). If condition (iii) is also satisfied (i.e., the pair $(A, B)$ is rich) then $A+B$ has all the ``non-extreme'' elements of $I+I$, and thus $|A+B| > |\pm (A-B)|$. This intuition is formalized in the proof of the following lemma, and justifies our nomenclature.

\begin{lemma} A rich MSTD pair is an MSTD pair.
\end{lemma}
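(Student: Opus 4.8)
The plan is to compute $|A+B|$ exactly and bound $|\pm(A-B)|$ from above by decomposing both the sum range $[0,2n]$ and the difference range $[-n,n]$ into an ``extreme'' part near each end and a large ``middle'' part, so that the equal middle contributions cancel and what survives is precisely the MSTD fringe inequality of Definition~\ref{def_fringe}.

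First I would handle the sum set. Partition $[0,2n]$ into the low fringe $[0,k]$, the middle $[k+1,2n-k-1]$, and the high fringe $[2n-k,2n]$; these are disjoint and cover $[0,2n]$ since $k<n/2$. Condition~(iii) of richness guarantees that the entire middle lies in $A+B$, contributing exactly $2n-2k-1$ elements. For the low fringe, observe that if $a\in A$, $b\in B$ and $a+b\le k$, then since $a,b\ge 0$ we must have $a,b\in[0,k]$, whence $a\in L$ and $b\in L'$; thus $(A+B)\cap[0,k]=(L+L')\cap[0,k]$. For the high fringe, apply the reflection $x\mapsto 2n-x$, which identifies $a+b\ge 2n-k$ with $(n-a)+(n-b)\le k$ and forces $n-a\in R$, $n-b\in R'$, giving $|(A+B)\cap[2n-k,2n]|=|(R+R')\cap[0,k]|$. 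Summing, $|A+B|=|(L+L')\cap[0,k]|+|(R+R')\cap[0,k]|+(2n-2k-1)$.

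Next I would bound the difference set. Using that $\pm(A-B)$ is symmetric about $0$, partition $[-n,n]$ into the two extreme blocks $[n-k,n]$ and $[-n,-(n-k)]$ together with a middle block of length $2n-2k-1$, which I bound trivially by its full length. For the upper extreme, a difference $a-b\ge n-k$ forces $n-a\in[0,k]$ and $b\in[0,k]$, so it comes from $R+L'$, while a difference $b-a\ge n-k$ comes from $R'+L$; hence the reflection $d\mapsto n-d$ identifies $\pm(A-B)\cap[n-k,n]$ with $((L+R')\cup(L'+R))\cap[0,k]$, and by symmetry the lower extreme has the same cardinality. This yields $|\pm(A-B)|\le 2|((L+R')\cap[0,k])\cup((L'+R)\cap[0,k])|+(2n-2k-1)$.

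Subtracting the two estimates, the common middle term $2n-2k-1$ cancels, and the resulting lower bound for $|A+B|-|\pm(A-B)|$ is exactly the difference between the two sides of the MSTD fringe inequality, which is positive by Definition~\ref{def_fringe}; hence $|A+B|>|\pm(A-B)|$ and $(A,B)$ is an MSTD pair. The only delicate points are the bookkeeping that sums (respectively differences) landing in a fringe are forced to arise from fringe elements — this is where $k<n/2$ and non-negativity are used — and correctly tracking the four sumsets $L+L'$, $R+R'$, $L+R'$, $L'+R$ through the two reflection maps $x\mapsto 2n-x$ and $d\mapsto n-d$. I expect this case-tracking, rather than any genuine inequality, to be the main thing to get right.
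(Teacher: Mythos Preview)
Your proof is correct and follows essentially the same approach as the paper: both split the sum range and the difference range into fringe pieces and a middle, use richness to make the sum middle full, bound the difference middle trivially by its length $2n-2k-1$, and identify the fringe contributions with $(L+L')\cap[0,k]$, $(R+R')\cap[0,k]$, and $((L+R')\cup(L'+R))\cap[0,k]$ via the reflections $x\mapsto 2n-x$ and $d\mapsto n-d$. The only cosmetic difference is packaging: the paper writes the comparison as two separate inequalities \eqref{eq:inequalityone} and \eqref{eq:inequalitytwo}, while you compute $|A+B|$ exactly and bound $|\pm(A-B)|$ and then subtract, but the content is identical.
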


\begin{proof} The proof is similar to the proof of Lemma 2.5 in \cite{Zh2}. We want $|A+B| > |\pm(A-B)|$. It suffices to show the following two inequalities:
\be\label{eq:inequalityone} |(A+B) \cap ([0, k] \cup [2n-k, 2n])|\ >\ |\pm(A-B) \cap ([-n, -n+k] \cup [n-k, n])|
\ee
\be\label{eq:inequalitytwo} |(A+B) \cap [k+1, 2n-k-1]|\ \geq\ |\pm(A-B) \cap[-n+k+1, n-k-1]|.
\ee

The inequality in \eqref{eq:inequalitytwo} follows immediately from the richness criterion.  To prove \eqref{eq:inequalityone}, note that
\bea (A+B) \cap [0,k] &\ =\ & (L+L') \cap [0,k] \nonumber\\
(A+B) \cap [2n-k, 2n] &=& ((n-R)+(n-R'))\cap[2n-k, 2n]\ =\ 2n-(R+R')\cap[0,k]\nonumber\\
(A-B) \cap [-n, -n+k] &=& (L- (n-R'))\cap[-n, -n+k]\ =\ (L+R')\cap[0,k] - n\nonumber\\
(B-A) \cap [-n, -n+k] &=& (L' -(n-R))\cap[-n, -n+k]\ =\ (L'+R)\cap[0,k] - n\nonumber\\
(A-B) \cap[n-k,n] &=& (L-(n-R'))\cap[n-k,n]\ =\ n- (L+R')\cap[0,k]\nonumber\\
(B-A) \cap[n-k, n] &=& (L'-(n-R))\cap[n-k,n]\ =\ n- (L'+R)\cap[0,k]. \eea
Hence
\be |\pm(A-B) \cap ([-n, -n+k] \cup [n-k, n])|\ =\ 2|((L + R') \cap [0,k]) \cup ((L'+R) \cap [0,k])|,\ee
while
\be |(A+B) \cap ([0, k] \cup [2n-k, 2n])|\ =\ |(L + L') \cap [0,k]| + |(R+R') \cap [0,k]|.\ee
The desired inequality then follows from the definition (\ref{def_fringe}) of an MSTD fringe tuple.
\end{proof}

Much like in \cite{Zh2}, we will see in the proof of Proposition \ref{prop_lim_exist} that \emph{almost all MSTD pairs are rich}.  Following \cite{Zh2} we define a partial order on fringe tuples below, which allows us to count fringe tuples without redundancy.

\begin{defi}[Partial ordering of fringe tuples]\label{defi:partialorderfringetuples} We say $(L, L', R, R'; k) > (M, M', T, T'; j)$ if $k > j$ and
\bea & & M  \ =\ L\cap[0,j], \ \ \ \ M' \ = \ L'\cap[0,j], \ \ \ \  T\ =\ R\cap[0,j], \ \ \ \ T'\ =\ R'\cap[0,j] \nonumber\\ & & \ \ \ \ \ \ \ \ \ \ \ \  [j, k] \ \subseteq \ L+L', \ \ \ \ [j, k] \ \subseteq\ R+R'.\eea
\end{defi}

The arguments in \cite{Zh2} also show that minimal fringe tuples for a given rich pair $(A,B)$ are unique, and they are minimal in the partial order of all fringe tuples. This allows us to count rich MSTD pairs by their minimal fringe tuples.

Fix any $k > 0$. For $n > 2k$, let $\mathbb{P}_n[E]$ denote the probability that, out of all $(p,\rho_1, \rho_2) = \vec{\rho}$ correlated pairs of subsets $(A,B)$ of $[0, n]$, $A$ and $B$ satisfy the conditions prescribed by the event $E$.

Let $P_n(\vec{\rho})(L,L',R,R';k)$ be the probability that the pair $(A,B) \in I_n$ is a rich MSTD $\vec{\rho}$-pair with fringe profile $(L,L',R,R';k)$; that is
$P_n(\vec{\rho})(L,L',R,R';k)$ equals \be \mathbb{P}_n[(A,B) \text{ has fringe profile } (L,L',R, R';k) \text{ and } [k+1, 2n-k-1] \ \subseteq\  A+B].\ee We write this more compactly as
\be P_n(\vec{\rho})(L,L',R,R';k)\ :=\ \mathbb{P}_n[(L,L',R, R';k), \, [k+1, 2n-k-1] \ \subseteq\  A+B].
\ee

\begin{lemma} For any fringe profile $(L,L',R,R';k)$ and any $\vec{\rho} = (p, \rho_1, \rho_2)$, the following limit exists:
\be
P(\vec{\rho})(L,L',R,R';k)\ :=\ \lim_{n \to \infty} P_n(\vec{\rho})(L,L',R,R';k).
\ee
\end{lemma}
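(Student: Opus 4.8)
The plan is to express $P_n(\vec{\rho})(L,L',R,R';k)$ as a product of a ``fringe factor'' that does not depend on $n$ and a ``middle factor'' $\mathbb{P}_n[[k+1, 2n-k-1] \subseteq A+B]$, and then show the middle factor converges. The fringe conditions (i) and (ii) of Definition \ref{def_rich} only constrain the elements of $A$ and $B$ in $[0,k] \cup [n-k, n]$. Since these two blocks are disjoint once $n > 2k$, and since in a $\vec{\rho}$-correlated pair distinct indices $k \in I_n$ are chosen independently of one another, the event ``$(A,B)$ has fringe profile $(L,L',R,R';k)$'' is an event of fixed probability depending only on $\vec{\rho}$, $k$, and the tuple — call it $q(\vec{\rho})(L,L',R,R';k)$. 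The subtlety is that the richness event $[k+1, 2n-k-1] \subseteq A+B$ is \emph{not} independent of the fringe, because elements of the fringe (say $a \in L$ with $a \leq k$) can contribute sums landing in the middle interval $[k+1, 2n-k-1]$. So I would first argue that conditioning on the fringe profile fixes finitely many coordinates, and write
\be
P_n(\vec{\rho})(L,L',R,R';k)\ =\ q(\vec{\rho})(L,L',R,R';k)\cdot \mathbb{P}_n\big[[k+1, 2n-k-1]\subseteq A+B \,\big|\, (L,L',R,R';k)\big].
\ee
It therefore suffices to prove the conditional middle probability converges as $n \to \infty$.

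**Convergence of the middle factor via a sandwich**

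The key step is to control $\mathbb{P}_n[[k+1, 2n-k-1]\subseteq A+B \mid \text{fringe}]$. By the complementary event and a union bound, the probability that some $m \in [k+1, 2n-k-1]$ is \emph{missing} from $A+B$ is at most $\sum_{m=k+1}^{2n-k-1}\mathbb{P}(m \notin A+B \mid \text{fringe})$. Lemma \ref{lem_p3_p4} gives $\mathbb{P}(m \notin A+B)$ as $\rho_3^{\,\min\{\cdot,\cdot\}}$ (or $\rho_4$ times such a power), and since $0 < p < 1$ and $0 < \rho_1 + \rho_2 < 2$ force $\rho_3 < 1$, these terms decay geometrically away from the endpoints $m \approx 0$ and $m \approx 2n$. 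Conditioning on the fringe changes finitely many coordinates, so it multiplies each such probability by at most a bounded constant; the tail sum over $m$ in the middle range is uniformly summable. This shows that, writing $F_n := \mathbb{P}_n[[k+1, 2n-k-1]\subseteq A+B \mid \text{fringe}]$, the sequence $F_n$ is bounded below by $1 - \varepsilon_n$ with $\varepsilon_n \to 0$ and is trivially bounded above by $1$. Thus $F_n \to 1$, and hence the limit
\be
P(\vec{\rho})(L,L',R,R';k)\ =\ q(\vec{\rho})(L,L',R,R';k)
\ee
exists. I would mirror the approach in \cite{Zh2} here, since this is the analogue of their Lemma.

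**Handling the monotonicity/independence obstacle**

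The main obstacle is making the conditioning argument clean: the richness event is a monotone (increasing) event in the coordinates of $A$ and $B$, while the fringe profile fixes some coordinates to prescribed values, so I cannot simply assert independence. The cleanest route is to bound the conditional missing-probability directly. For each middle coordinate $m$, the event $\{m \notin A+B\}$ depends on the pairs $(i, m-i)$ of indices summing to $m$; only boundedly many such pairs (at most $k+1$ of them) involve a fringe index, and the remaining unconditioned pairs still supply enough independent ``chances'' to realize $m$ as a sum that the conditional probability of $m \notin A+B$ remains bounded by a constant multiple of the unconditional bound from Lemma \ref{lem_p3_p4}. Because $\rho_3 < 1$, summing $\rho_3^{\min\{m/2,\,(2n-m)/2\}}$ over all $m$ in the middle range yields a bound of the form $C/(1-\rho_3)$ uniform in $n$, and in fact the sum over $m \geq k+1$ of the \emph{tail} can be forced below any prescribed $\varepsilon$ by the geometric decay. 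This establishes $\varepsilon_n \to 0$ and completes the proof; no delicate cancellation is needed, only the geometric decay guaranteed by $\rho_3 < 1$ in the assumed regime $0<p<1$, $0<\rho_1+\rho_2<2$.
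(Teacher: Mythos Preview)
Your argument contains a genuine error: the claim that $F_n \to 1$ (equivalently $\varepsilon_n \to 0$) is false, and consequently the asserted identity $P(\vec{\rho})(L,L',R,R';k) = q(\vec{\rho})(L,L',R,R';k)$ is wrong.  The union bound gives
\[
1 - F_n \ \le\ \sum_{m=k+1}^{2n-k-1}\mathbb{P}\big(m\notin A+B\ \big|\ \text{fringe}\big)
\ \le\ C\sum_{m=k+1}^{2n-k-1}\rho_3^{\min\{m/2,(2n-m)/2\}},
\]
but for \emph{fixed} $k$ the right-hand side does not tend to $0$ as $n\to\infty$: the terms with $m$ near $k+1$ (or near $2n-k-1$) contribute $\rho_3^{(k+1)/2}$, which is a fixed positive number.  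In fact $F_n$ typically converges to a value strictly between $0$ and $1$.  For a concrete obstruction, take any fringe with $0\notin L\cup L'$ and $k+1\notin L+L'$; then every representation $k+1 = i + (k+1-i)$ uses an index $0$ or $k+1$, and since $0\notin A\cup B$ is forced by the fringe, $k+1\notin A+B$ deterministically, so $F_n=0$ for all $n$.  Your final paragraph conflates two different ``tails'': the tail in $m$ (which is small once $m$ is large) and the limit in $n$ (which does nothing to the terms with $m$ near $k+1$).

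The paper's proof avoids this difficulty by not trying to sandwich $F_n$ at all.  Instead it decomposes the complementary event $\{[k+1,2n-k-1]\not\subseteq A+B\}$ according to the \emph{first} missing middle sum $j$, writing
\[
P_n(\vec{\rho})(L,L',R,R';k)
= \mathbb{P}_{2k}[(L,L',R,R';k)] - \sum_{j=k+1}^{2n-k}\mathbb{P}_{j+k}\big[(L,L',R,R';k),\ [k+1,j-1]\subseteq A+B,\ j\notin A+B\big],
\]
and observes that each summand depends only on coordinates in $[0,j]\cup[n-k,n]$, hence is independent of $n$ once $n>j+k$.  Sending $n\to\infty$ then amounts to replacing the finite sum by an infinite one, whose convergence follows from monotone convergence (the partial sums are probabilities, hence bounded by $1$).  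The limiting value is strictly less than $q(\vec{\rho})(L,L',R,R';k)$ whenever the infinite sum is positive.  If you want to repair your approach rather than adopt the paper's decomposition, the quickest fix is to prove monotonicity $P_{n+1}\le P_n$ directly (this follows from the paper's formula, and is noted in its appendix), which immediately gives existence of the limit; but the sandwich $1-\varepsilon_n\le F_n\le 1$ with $\varepsilon_n\to 0$ cannot be made to work.
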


\begin{proof} Following the example in \cite{Zh2}, we break up the event $[k+1, 2n-k-1] \not \in A+B$ into the disjoint events
\be [k+1, j-1]\ \in\ A+B, \ \ \ \ j \not \in A+B\ee
for each $k < j \leq 2n-k$. Thus
\bea & & \mathbb{P}_n\left[(L,L',R, R';k), \, [k+1, 2n-k-1] \ \subseteq\  A+B\right] \nonumber\\
& & \ =\  \mathbb{P}_n[(L, L', R, R';k)] - \sum_{j > k}^{2n-k} \mathbb{P}_n[(L, L', R, R';k), [k+1, j-1] \in A+B; \, j \not \in A+B] \nonumber\\
& & \ =\ \mathbb{P}_{2k}[(L, L', R, R';k)] - \sum_{j > k}^{2n-k} \mathbb{P}_{j+k}[(L, L', R, R';k), [k+1, j-1] \in A+B; \, j \not \in A+B],\nonumber\\ \eea
where in the final line we have replaced the $n$ subscripts with smaller ones, which we can do because these events only involve at most $2k$ (resp. $j+k$) elements, and the probabilities do not change when we allow for more middle elements to belong (or not belong) to $A$ and $B$. Thus everything except the upper limit on the sum is independent of $n$. We send $n$ to infinity and find
\begin{align} & P(\vec{\rho})(L,L',R,R';k)\ :=\ \lim_{n \to \infty} P_n(\vec{\rho})(L,L',R,R';k) \nonumber\\
&\ =\ \mathbb{P}_{2k}[(L, L', R, R';k)] - \sum_{j > k}^{\infty} \mathbb{P}_{j+k}[(L, L', R, R';k), [k+1, j-1] \in A+B; \, j \not \in A+B].
\end{align}
Since each term in the sum is non-negative and the total sum is bounded above by 1 (as the partial sums represent legitimate probabilities), the monotone convergence theorem says the sum converges, and thus the limiting probability exists.  \end{proof}

The next definition isolates our key object of study; we prove that it exists and give a formula for it in the proposition that follows.

\begin{defi}[$P(\vec{\rho})$] For $\vec{\rho} \in [0,1]^3$, set
\be P(\vec{\rho})\ :=\ \lim_{n \to \infty} \mathbb{P}_n [(A,B) \text{ is an MSTD } (p, \rho_1, \rho_2) \text{-correlated pair }].
\ee \end{defi}

\begin{prop}\label{prop_lim_exist} The limit $P(\vec{\rho})$ exists and is given by
\be\label{sumAllFringe}
 \sum_{(L,L',R,R';k)} P(\vec{\rho})(L,L',R,R';k), \ee
where the sum is taken over all minimal fringe tuples $(L,L',R,R';k)$.
\end{prop}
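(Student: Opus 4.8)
The plan is to show that $P(\vec\rho)$ equals the sum of the fringe-tuple probabilities by proving that the event ``$(A,B)$ is MSTD'' agrees, up to a set of vanishing probability, with the disjoint union over minimal fringe tuples of the events ``$(A,B)$ is a rich MSTD pair with that minimal fringe tuple.'' Concretely, I would write
\[
\mathbb{P}_n[(A,B) \text{ is MSTD}] \ = \ \mathbb{P}_n[(A,B) \text{ is a rich MSTD pair}] + \mathbb{P}_n[(A,B) \text{ is MSTD but not rich}],
\]
and establish two facts: first, that the rich-MSTD probability decomposes as the finite-at-each-order sum $\sum_{(L,L',R,R';k)} P_n(\vec\rho)(L,L',R,R';k)$ over \emph{minimal} fringe tuples (using the uniqueness of the minimal fringe tuple for a given rich pair, noted after Definition \ref{defi:partialorderfringetuples}, so that no pair is counted twice); and second, that the error term $\mathbb{P}_n[(A,B) \text{ is MSTD but not rich}]$ tends to $0$ as $n\to\infty$. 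Granting these, I pass to the limit using the already-established existence of each $P(\vec\rho)(L,L',R,R';k)$, and identify the limit as \eqref{sumAllFringe}.

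The first step I would carry out is the decomposition over minimal fringe tuples. Every rich MSTD pair has a well-defined minimal fringe tuple by the uniqueness remark, so summing $P_n(\vec\rho)(L,L',R,R';k)$ over minimal tuples exactly counts the rich MSTD pairs without redundancy; the previous lemma gives the termwise limits $P(\vec\rho)(L,L',R,R';k)$. I then need the sum to behave well in the limit: since all terms are nonnegative and, for each fixed $n$, the finite partial sums are genuine probabilities bounded by $1$, I would invoke monotone convergence (or dominated convergence against the constant $1$) to exchange the limit and the sum, yielding $\lim_n \mathbb{P}_n[\text{rich MSTD}] = \sum_{(L,L',R,R';k)} P(\vec\rho)(L,L',R,R';k)$.

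The main obstacle, and the heart of the proof, is showing that non-rich MSTD pairs are asymptotically negligible, i.e. $\mathbb{P}_n[(A,B) \text{ MSTD but not rich}] \to 0$. The strategy, following Zhao, is to argue that with overwhelming probability $A+B$ contains the entire middle interval $[k+1, 2n-k-1]$ for some slowly growing $k = k(n)$. Using Lemma \ref{lem_p3_p4}, the probability that a fixed middle value $j$ fails to lie in $A+B$ is at most $\rho_3^{\,\min\{j,2n-j\}/2}$ up to the constant factor $\rho_4$, and since $0 < p < 1$ and $0 < \rho_1+\rho_2 < 2$ force $\rho_3 < 1$, a union bound over $j \in [k+1, 2n-k-1]$ shows the probability that some middle element is missing decays geometrically in $k$. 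Balancing this against the (polynomially many) fringe configurations of order up to $k$, I would choose $k(n) \to \infty$ slowly enough that the total tail contribution vanishes, so that a pair failing to be rich either misses a middle sum (exponentially unlikely) or has fringe order exceeding $k(n)$ (a tail of the convergent series, hence small). Once this tail estimate is in place, combining it with the decomposition above completes the identification of $P(\vec\rho)$ with \eqref{sumAllFringe}.
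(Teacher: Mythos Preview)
Your overall strategy matches the paper's: decompose into rich and non-rich MSTD pairs, express the rich part as a sum over minimal fringe tuples, and show the non-rich part vanishes. The gap is in this last step.

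Your dichotomy for the error term---``a pair failing to be rich either misses a middle sum or has fringe order exceeding $k(n)$''---is incomplete. A pair that is not rich has no fringe order at all, so the second alternative does not apply to it. More substantively, fix a level $K$ and suppose $(A,B)$ is MSTD with $[K+1,2n-K-1]\subseteq A+B$, yet the induced fringe $(L,L',R,R';K)$ fails the inequality in Definition~\ref{def_fringe}. Then no middle sum is missing, but the pair is still not rich at level $K$ because the fringe condition fails. Since the fringe contributes at least as many extreme elements to $\pm(A-B)$ as to $A+B$, the only way $(A,B)$ can nevertheless be MSTD is for some \emph{middle difference} in $[K-n,n-K]$ to be absent from $\pm(A-B)$. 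Your argument invokes only Lemma~\ref{lem_p3_p4} on missing sums and never addresses this case; the paper closes it via Lemma~\ref{lem_p3_difference}, which gives the analogous geometric bound $\rho_3^{n/3}$ (or $\rho_3^{n-k}$) for missing differences. With both lemmas in hand, a union bound shows that the probability of missing \emph{either} a middle sum or a middle difference is $O(\rho_3^{K/2})\to 0$, and then the rest of your outline goes through.

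A smaller point: ``dominated convergence against the constant $1$'' does not justify the limit--sum exchange, since the constant sequence $1$ is not summable over the infinitely many minimal fringe tuples. What actually makes the exchange legitimate is that the events indexed by distinct minimal fringe tuples are pairwise disjoint (by uniqueness of the minimal tuple), so for every $n$ the partial sums of $P_n(\vec\rho)(L,L',R,R';k)$ are bounded by $1$; combined with the termwise convergence, this is enough.
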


\begin{proof}
As assumed, $0<p<1$ and $0<\rho_1+\rho_2<2$. Fix a positive integer $K$ and let $n$ be large enough.

Suppose $(A,B)$ is an MSTD pair of $I_n$.  Let $L, L'$ be intersections of $A,B$ with $[0,K]$ and $R,R'$ be intersections of $A,B$ with $[n-K,n]$. We will prove that when $n$ gets large, $(A,B)$ is a rich MSTD pair with probability 1. Indeed, suppose $(A,B)$ is \emph{not} a rich MSTD pair of order at most $K$.
This means either $(A,B)$ is not rich, or it is rich with order greater than $K$.

In the first case, since $(L,L',R,R')$ is not an MSTD fringe, the size of difference set is not smaller than that of the sum set on the fringes. Hence there must be at least
a middle difference, i.e., a difference in $[K-n,n-K]$, be missing (otherwise $(A,B)$ cannot be sum dominant). In the second case, since $(L,L',R,R',K)$ is a fringe pair, and yet $(A,B)$ is not a rich MSTD pair of order $K$, there must be a middle sum missing, i.e., there exists some number in $[K, 2n-K]$ that is not in $A+B$. Let $E$ denote this event. We use the result from Lemma \ref{lem_p3_p4} to calculate $\mathbb{P}(E)$. Note that since $p\neq 0,1$ and $(\rho_1,\rho_2)\neq (0,0),(1,1)$, we have $0<\rho_3<1$. We find
\be
\mathbb{P}(E)\ =\
\mathbb{P}\left(\bigcup_{i=K}^{2n-K} (i\notin A+B)\right)
\ \leq\ \sum_{i=K}^{2n-K} \mathbb{P}(i\notin A+B)
\ \leq\  4\sum_{i=K/2}^{n/2} \rho_3^{i}\leq \frac{4}{1-\rho_3}\rho_3^{K/2},
\ee which goes to zero as $K \to \infty$, proving the claim for missing at least one middle sum; the proof for the probability of missing at least one middle difference proceeds similarly, using Lemma \ref{lem_p3_difference}.

We therefore have proved that when $n$ gets large, almost all MSTD pairs are rich MSTD with fringes. Therefore, by summing over all fringes as in \eqref{sumAllFringe}, we get $P(\vec{\rho})$. Note that
each term in \eqref{sumAllFringe} exists and their sum is less than 1, hence this sum converges.
\end{proof}

\begin{prop}\label{prop_lim_positive} We have $P(\vec{\rho}) > 0$ for any $\vec{\rho}$ with $0<p<1$ and $0<\rho_1+\rho_2 < 2$.
\end{prop}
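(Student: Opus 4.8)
The plan is to prove positivity of $P(\vec{\rho})$ by exhibiting a single explicit MSTD fringe tuple $(L,L',R,R';k)$ and showing that the corresponding term $P(\vec{\rho})(L,L',R,R';k)$ in the sum \eqref{sumAllFringe} is strictly positive. Since every term in that sum is non-negative, producing one strictly positive term immediately forces $P(\vec{\rho}) > 0$. This reduces the whole problem to two separable tasks: a combinatorial task (find a valid MSTD fringe tuple) and a probabilistic task (show it contributes positive mass in the limit).

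For the combinatorial task, I would start from the known MSTD pairs in the range of the theorem we are aiming for — for instance the pair $A = \{0,1,4,6,7\}$, $B=\{2,3,5\}$ from Theorem \ref{thmMinimalSize}, or more simply adapt Conway's set $\{0,2,3,4,7,11,12,14\}$ to the two-set setting — and read off a fringe tuple on some fixed window $[0,k]$ that satisfies the strict inequality in Definition \ref{def_fringe}. The point is only that \emph{some} quadruple $(L,L',R,R')\subseteq[0,k]^4$ makes $|(L+L')\cap[0,k]| + |(R+R')\cap[0,k]|$ strictly exceed $2|((L+R')\cap[0,k])\cup((L'+R)\cap[0,k])|$; since such configurations are already known to exist (they are exactly the fringes of genuine MSTD pairs constructed elsewhere in the paper), this step is essentially a citation of an explicit example. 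I would fix one such tuple once and for all, independent of $\vec{\rho}$.

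For the probabilistic task, recall from the lemma preceding Definition \ref{def_fringe} that
\be P(\vec{\rho})(L,L',R,R';k)\ =\ \mathbb{P}_{2k}[(L,L',R,R';k)] - \sum_{j>k}^{\infty}\mathbb{P}_{j+k}[(L,L',R,R';k),\ [k+1,j-1]\in A+B;\ j\notin A+B].\ee
The first term $\mathbb{P}_{2k}[(L,L',R,R';k)]$ is the probability that the fringe profile is realized on the two windows, which is a finite product of the fundamental inclusion probabilities determined by $(p,\rho_1,\rho_2)$; because $0<p<1$ and $0<\rho_1+\rho_2<2$ every individual membership and non-membership event has probability bounded away from $0$ and $1$, so this term is strictly positive. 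The subtracted sum is bounded above by $\sum_{j>k}^\infty \mathbb{P}_{j+k}[j\notin A+B]$, and by Lemma \ref{lem_p3_p4} this tail decays geometrically in $j$ at rate $\rho_3 < 1$, exactly as in the geometric bound $\frac{4}{1-\rho_3}\rho_3^{K/2}$ used in the proof of Proposition \ref{prop_lim_exist}. The remaining obstacle — and the one genuine technical point — is to verify that this geometric tail is strictly smaller than the positive first term. The clean way to arrange this is to \emph{enlarge} the window: replace $k$ by a larger $K\ge k$, extending the chosen fringe tuple by forcing $A+B$ to contain the full interval $[k+1,K]$ on both ends (which costs only another finite product bounded away from zero), so that the leading probability stays positive while the subtracted tail is of order $\rho_3^{K/2}$ and hence can be made as small as we like by taking $K$ large. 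Choosing $K$ large enough that the tail is less than half the leading term yields $P(\vec{\rho})(L,L',R,R';K)>0$, completing the proof.

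The main obstacle to watch is precisely this competition between the fixed positive contribution of a realized fringe and the geometric penalty for the missing-middle-sum events; the argument succeeds because the penalty decays geometrically in the window size while the fringe contribution, though shrinking with the window, can be kept positive, and a single large-enough window suffices. One should also note that the strict inequality $\rho_3<1$ (established in the proof of Proposition \ref{prop_lim_exist} from $p\ne 0,1$ and $(\rho_1,\rho_2)\ne(0,0),(1,1)$) is exactly what makes the tail summable, tying the hypothesis $0<\rho_1+\rho_2<2$ directly to the convergence that the argument needs.
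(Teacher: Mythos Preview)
Your overall strategy matches the paper's: exhibit an MSTD fringe tuple and show that the corresponding rich-pair probability is positive. However, there are two genuine gaps.

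First, your claim that ``every individual membership and non-membership event has probability bounded away from $0$ and $1$'' fails on part of the allowed parameter region. The hypothesis $0<\rho_1+\rho_2<2$ excludes only $(\rho_1,\rho_2)\in\{(0,0),(1,1)\}$; it still permits, for instance, $\rho_1=0$ (so $\mathbb{P}(i\in A\cap B)=p\rho_1=0$) or $\rho_2=1$ (so $\mathbb{P}(i\notin A\cup B)=(1-p)(1-\rho_2)=0$). Consequently no single fringe tuple can have positive probability for \emph{all} admissible $\vec{\rho}$, contrary to your plan to ``fix one such tuple once and for all, independent of $\vec{\rho}$''. The paper deals with this by a case split: when $\rho_1\neq 0$ it uses a fringe of the form $(L,L,R,R;k)$ borrowed from \cite{MO}, namely $L=\{0,2,3,7,8,9,10\}$, $R=\{1,2,3,6,8,9,10,11\}$; when $\rho_1=0$ (hence $\rho_2>0$) it switches to an $(A,A^c)$-type fringe with $L=R=\{1,2,3,5,7,8\}$ and $L',R'$ their complements in $[0,k]$.

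Second, your window-enlargement comparison does not close as written. You bound the subtracted tail by dropping the fringe event entirely, obtaining a tail of order $\rho_3^{K/2}$, and then assert that the leading term $\mathbb{P}_{2K}[(L,L',R,R';K)]$ ``stays positive''. But that leading term itself decays exponentially in $K$ (each element added to the buffer costs a factor such as $p\rho_1$), and for typical $\vec{\rho}$ one has $(p\rho_1)^4\ll\rho_3$, so the leading term decays \emph{faster} than your tail bound and no choice of $K$ makes the difference positive. The remedy---which is what the paper means by ``additionally imposing that $[12,12+j]\subset A\cap B$ for sufficiently large $j$''---is to retain the fringe event inside each tail term: then each term is at most $\mathbb{P}[\text{fringe}]\cdot\mathbb{P}[j\notin A+B\mid\text{fringe}]$, and the buffer forces the conditional probability of a missing middle sum to carry an extra factor that can be driven below any prescribed threshold by taking $K$ large, after which the common factor $\mathbb{P}[\text{fringe}]$ cancels and positivity follows.
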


\begin{proof}
As the argument is similar to one in \cite{MO}, we only sketch the proof here. Unless $\rho_1 =0$, any MSTD fringe pair $(L,R;k)$ for $(A,A)$ works as a fringe tuple $(L,L,R,R;k)$ for $(A,B)$, and occurs with fixed positive probability. One such fringe is given in \cite{MO}:  $L=\{0,2,3,7,8,9,10\}$ and $R=\{1,2,3,6,8,9,10,11\}$. By additionally imposing that $[12, 12+j] \subset A \cap B$, for sufficiently large $j$ (which depends on $\vec{\rho}$), we can ensure that $(A,B)$ is rich with positive probability. Thus $P(\vec{\rho}) \geq P(\vec{\rho})(L,L,R,R;k) > 0$.

Now we handle the case when $\rho_1=0$. Since $\rho_2 > 0$, a fringe profile for $(A,A^c)$ occurs with positive probability in this case, and the same reasoning above will hold. Thus it suffices to exhibit a single MSTD fringe profile for $(A,A^c)$. One such fringe profile is $L=R=\{1,2,3,5,7,8\}$.
\end{proof}

\begin{proof}[Proof of Theorem \ref{thmConvergence}] The proof follows immediately from Propositions \ref{prop_0}, \ref{prop_lim_exist} and \ref{prop_lim_positive}. \end{proof}

\section{The probability function $P$}\label{sec:p_function}

We now investigate the behavior of the function $P:[0,1]^3\to [0,1]$, which gives the limiting probability of selecting an MSTD $\vec{\rho}$-correlated pair $(A,B)$ from $I_n$ as $n \to \infty$. We prove that $P$ is continuous, as stated in Theorem \ref{thmContinuous}. Afterwards we compute the probability function for $n = 8$ and discuss some conjectures about the behavior of $P$.

\begin{proof}[Proof of Theorem \ref{thmContinuous}] We first prove continuity away from the zeros; i.e., at points $\vec{\rho}$ such that $P(\vec{\rho}) \neq 0$. By Proposition 2.11, we know the zeros of $P$ are exactly the set
\be Z : = \{ (p, \rho_1, \rho_2) \in [0,1]^3 : p \in \{0, 1\} \text{ or } (\rho_1 + \rho_2) \in \{0, 2\} \}, \ee
which is a closed set in $\mathbb{R}^3$. We first show that $P$ is continuous on the open set $Z^c$, and then show that as $\vec{\rho}$ approaches any point in $Z$, the value of $P(\vec{\rho})$ approaches 0, so that $P$ is continuous on $[0,1]^3$.

We first prove that for each minimal fringe profile, $P(\vec{\rho})(L,L',R,R';k)$ is a continuous function of $\vec{\rho}$ away from $Z$ (note that these functions are also zero on $Z$). We start with the definition: \bea  P(\vec{\rho})(L,L',R,R';k) & \ :=\ & \mathbb{P}_{2k}[(L, L', R, R';k)]\nonumber\\ & & \ -\ \sum_{j > k}^{\infty} \mathbb{P}_{j+k}[(L, L', R, R';k), [k+1, j-1] \in A+B; \, j \not \in A+B].\nonumber\\ \eea The first term on the right hand side is continuous, since
\be
\mathbb{P}_{2k}[(L, L', R, R';k)]\ =\ \sum_{(A,B) \text{ has fringe profile}  (L, L', R, R';k)} \mathbb{P}_{2k}[(A,B)],
\ee
and the probability of getting $(A,B)$ is just a polynomial in $p$, $\rho_1$ and $\rho_2$, so this sum is continuous. Similarly, each term in the second sum is continuous, as we can view each term as a sum over suitable pairs $(A,B)$ of the probability of picking the pair $(A,B)$, each of which is a polynomial.

Thus to show that the infinite sum itself is continuous, it suffices to bound the tails uniformly. We will see that this follows from
\be
\mathbb{P}_{j+k}[(L, L', R, R';k), [k+1, j-1] \in A+B; \, j \not \in A+B] \, \leq \, \mathbb{P}_{j+k}[j \not \in A+B] .
\ee

The probability on the right, as computed in Lemma \ref{lem_p3_p4}, has the form $\rho_3^j$ where $\rho_3$ depends on $p,\rho_1,\rho_2$. For any fixed $\vec{\rho} \not \in Z$, restrict to a closed ball about $\vec{\rho}$ that lies entirely inside $Z^c$. We can pick $\vec{\rho}_{*}$ for which $\rho_3$  attains its maximal value $q_{*} < 1$ on this closed ball. Thus the tails are bounded by the tails of a convergent geometric series with ratio $q_{*}$, so the series converges uniformly and thus $P(\vec{\rho})(L,L',R,R';k)$ is continuous on $Z^c$.

Since \be P(\vec{\rho})\ =\ \sum_{(L,L',R,R';k)} P(\vec{\rho})(L,L',R,R';k)\ee and the summands are continuous functions of $\vec{\rho}$ on $Z^c$, it suffices to show that the tail sums
\be
 \sum_{(L,L',R,R';k) \text{ with } k > m} P(\vec{\rho})(L,L',R,R';k)
 \ee
 can be made uniformly small with $m$. This argument follows along the same lines as the proof of Proposition 2.14 in \cite{Zh2}. All contributions to this tail arise from sets where $A+B$ is missing a middle sum, where in this case ``middle'' means not in the first or the last $m$ elements. To show that these events are unlikely we use the union bound and the fact that we have a convergent infinite geometric series, starting with some maximizer (over a closed ball in $Z^c$), $q_{*}$, raised to the power $m$, which goes to zero as $m \to \infty$.

Now we must show that $P(\vec{\rho})$ approaches zero as $\vec{\rho}$ approaches any point in $Z$. First we show $P(\vec{\rho})(L, L', R, R'; k) \to 0$ as the distance dist$(\vec{\rho}, Z)$ tends to $0$. Note that
\be
P(\vec{\rho})(L,L',R,R';k)\ \leq\ \mathbb{P}_{2k}(\vec{\rho})[(L, L', R, R';k)].
\ee
As the probability on the right is a continuous function of $\rho$ which is zero on $Z$, we have \be \lim_{\text{dist}(\vec{\rho}, Z) \to 0} P(\vec{\rho})(L,L',R,R';k) = 0 \ee and thus the functions $P(\vec{\rho})(L,L',R,R';k)$ are continuous on $[0,1]^3$.
Observe that if $p > 0$ and $\rho_1 + \rho_2 > 0$, but still $\vec{\rho} \in Z$, then the same argument involving missing middle sums and differences based on Lemmas 2.1 and 2.2 works to show that $P$ is continuous at $\vec{\rho}$. So we only need to show that $P(\vec{\rho}) \to 0$ as $p \to 0$ or $\rho_1 + \rho_2 \to 0$, which is true because of theorem \ref{thmDecay} (see the next section). So, we conclude that $P(\vec{\rho})$ is continuous on $[0,1]^3$. \end{proof}

The following is an immediate consequence of the continuity of $P$ and the compactness of $[0,1]^3$.

\begin{cor} The function $P$ attains a maximum value on any compact domain. In particular, $P$ attains its maximum at some point in $[0,1]^3$. Moreover, for any $(\rho_1,\rho_2)$ fixed, $P$ as a function of $p$ attains its maximum at some point $p^\ast$. Similarly, for any fixed $p$, $P$ as a function of $(\rho_1,\rho_2)$ attains maximum at some point $(\rho_1^\ast,\rho_2^\ast)$.
\end{cor}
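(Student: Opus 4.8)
The plan is to deduce everything from the Weierstrass extreme value theorem, since Theorem \ref{thmContinuous} already supplies the one nontrivial ingredient: that $P$ is continuous on all of $[0,1]^3$. Recall the relevant statement: a continuous real-valued function on a nonempty compact set attains both its supremum and its infimum. With continuity in hand, each assertion of the corollary is just this theorem applied to a suitable compact domain.

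First I would handle the global statement. The cube $[0,1]^3$ is closed and bounded in $\R^3$, hence compact by Heine--Borel, and $P$ is continuous there by Theorem \ref{thmContinuous}, so it attains its maximum at some point $\vec{\rho}^\ast \in [0,1]^3$. For an arbitrary compact domain $K \subseteq [0,1]^3$, the restriction $P|_K$ is continuous on $K$ (a continuous map restricts to a continuous map on any subspace), and $K$ is compact, so the same argument yields a maximizer in $K$.

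For the two slice statements I would apply the identical reasoning to the appropriate compact subsets. Fixing $(\rho_1,\rho_2)$, the set $\{(p,\rho_1,\rho_2) : p \in [0,1]\}$ is a homeomorphic copy of $[0,1]$, hence compact, and the map $p \mapsto P(p,\rho_1,\rho_2)$ is the composition of the continuous inclusion $p \mapsto (p,\rho_1,\rho_2)$ with $P$, so it is continuous; by Weierstrass it attains its maximum at some $p^\ast$. Fixing $p$ instead, the set $\{(p,\rho_1,\rho_2) : (\rho_1,\rho_2) \in [0,1]^2\}$ is a compact copy of $[0,1]^2$ and $(\rho_1,\rho_2) \mapsto P(p,\rho_1,\rho_2)$ is continuous on it, so it attains its maximum at some $(\rho_1^\ast,\rho_2^\ast)$.

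There is essentially no obstacle here once continuity is established: the only things to verify are that each domain is compact and that each restriction or composition is continuous, both of which are immediate. All of the genuine content of this corollary was already absorbed into the proof that $P$ is continuous in Theorem \ref{thmContinuous}, which is precisely why it is fairly characterized as an immediate consequence.
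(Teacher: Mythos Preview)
Your proposal is correct and matches the paper's approach exactly: the paper simply declares the corollary ``an immediate consequence of the continuity of $P$ and the compactness of $[0,1]^3$,'' and you have just spelled out that immediate consequence via the extreme value theorem on the full cube and its slices.
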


As $P(\vec{\rho})$ is continuous on a compact set, we can conjecture where it attains its maximal values. We start by considering the function $P_n(\vec{p})$ for $n\geq 1$, which is the probability for a $(p,\rho_1,\rho_2)$ correlated pair $(A,B)$ from $I_n$ to be an MSTD set. When $n \to \infty$ this function should converge to our function $P$. We chose $n=8$ and numerically found all MSTD pairs of subsets $(A,B)\in I_8$. Letting $\mathcal{L}_8$ be the set of all such pairs, we found $|\mathcal{L}_8|=96$. For each pair $(A,B)$ found, we recorded $|A|$, $|B|$ and $|A\cap B|$. Since each element of $\{0,1,\dots,8\}$ is chosen independently, we can calculate \be P_8(p,\rho_1,\rho_2) \ = \ \sum_{(A,B)\in \mathcal{L}_8} p^{|A|}(1-p)^{9-|A|}\rho_1^{|A\cap B|}(1-\rho_1)^{|A|-|A\cap B|}\rho_2^{|B|-|A\cap B|}(1-\rho_2)^{9-|A|-|B|+|A\cap B|}.
\ee
We plotted $P_8(\vec{\rho})$ and found its maximum appears to be at $(1/2,0,1)$. Numerical explorations suggest that $P(1/2,0,1)\approx 0.03$, which is significantly larger than $P(1/2,1,0)\approx 4.5\times 10^{-4}$. These numbers, however, should be taken with a healthy degree of skepticism. These problems are computationally intense, and it is possible that the observed behavior differs for very large $n$. For a related problem with a similar numerical difficulty, see the work in \cite{DKMMWW}

%

We end with some observations and conjectures. If we fix $0<p<1$ and $\rho_1$ not too large, we observe that $P_8$ appears to be a strictly increasing function. If we could prove this, we would then know that it would attain its maximum at $\rho_2=1$. On the other hand, if we fix $0<p<1$ and $\rho_2$ not too small, $P_8$ appears to be a strictly decreasing function, and thus would attain its maximum at $\rho_1=0$. Finally, if we fix $(\rho_1,\rho_2)$, in most cases it appears that the maximum of $P_8$ happens at some point $p$ close to $1/2$. In the specific case when $(\rho_1,\rho_2)=(0,1)$, if we assume that $P_n$ is differentiable then we can easily prove that $p=1/2$ is a critical point. Indeed, let $Q_n(p)= P_n(p,0,1)$. Since in this case $B=A^c$, we find $Q_n(p)=Q_n(1-p)$. Taking the derivative of both sides yields
\be Q'_n(p) \ = \ -Q'(1-p). \ee Consequently, $Q'_n(1/2)=0$, or $p=1/2$ is a critical point of $Q_n$, and thus of $P_n$. This suggests the following conjecture.

\begin{conj}\label{conjecture1} The maximum of the function $P$ in $[0,1]^3$ occurs at $(1/2, 0, 1)$, and $P(\vec{\rho}) = P(1/2, 0, 1)\approx 0.03$.
\end{conj}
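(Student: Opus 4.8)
The plan is to split the conjecture into its two assertions: (a) the location of the maximizer at $(1/2,0,1)$, and (b) the numerical value $P(1/2,0,1)\approx 0.03$. Assertion (b) is intrinsically computational — there is no reason to expect a closed form for the limiting constant — so I would establish it only to within rigorous error bars, adapting Zhao's algorithmic scheme from \cite{Zh2} to the correlated setting: truncate the fringe-tuple sum in Proposition \ref{prop_lim_exist} at order $k\le m$, bound the tail by the geometric estimate $\tfrac{4}{1-\rho_3}\rho_3^{m/2}$ already used in that proof, and compute the finitely many surviving terms $P(1/2,0,1)(L,L',R,R';k)$ exactly. The real mathematical content is assertion (a), and for that I would prove the three monotonicity statements observed numerically for $P_8$: that $P$ is (i) nonincreasing in $\rho_1$, (ii) nondecreasing in $\rho_2$, and (iii) maximized at $p=1/2$ for each fixed $(\rho_1,\rho_2)$.

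For (i) and (ii), the natural tool is a monotone coupling after conditioning on $A$. Fixing $p$ and the realized set $A$, the set $B$ decomposes as $B_1\sqcup B_2$ with $B_1\subseteq A$ an independent $\rho_1$-thinning and $B_2\subseteq A^c$ an independent $\rho_2$-thinning; raising $\rho_2$ or lowering $\rho_1$ both stochastically push $B$ toward $A^c$. The obstacle — and I expect this to be the crux — is that the MSTD event $\{|A+B|>|\pm(A-B)|\}$ is \emph{not} monotone in $B$: enlarging $B$ can add elements to both $A+B$ and $\pm(A-B)$, so a naive FKG/coupling argument does not apply directly. I would instead work termwise in the decomposition $P(\vec\rho)=\sum P(\vec\rho)(L,L',R,R';k)$, where the minimality and uniqueness of fringe tuples let me treat the contributions separately, and try to show that each limiting fringe probability is monotone in the stated direction. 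Here the $(A,A^c)$ structure is favorable because, as one checks, imposing $B=A^c$ forces $L'=[0,k]\setminus L$ and $R'=[0,k]\setminus R$, which maximizes the fringe-sum count $|(L+L')\cap[0,k]|+|(R+R')\cap[0,k]|$ while constraining the cross terms $L+R'$ and $L'+R$.

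For (iii), I would exploit the complementation symmetry already noted in the text. On the slice $(\rho_1,\rho_2)=(0,1)$ one has $B=A^c$, and swapping the roles of $A$ and $B$ leaves both $A+B$ and $\pm(A-B)$ unchanged while sending $p\mapsto 1-p$; hence $Q_n(p):=P_n(p,0,1)$ satisfies $Q_n(p)=Q_n(1-p)$, forcing $p=1/2$ to be a critical point in the limit. Upgrading ``critical point'' to ``global maximum'' is the remaining difficulty: I would attempt to show $Q_n$, and hence its limit, is concave (or at least unimodal) on $[0,1]$, either by differentiating the explicit polynomial form of $P_n$ twice and controlling the sign, or by a second coupling comparing the fringe yield at $p$ and at $p\pm\epsilon$. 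Once (i)--(iii) are in place, combining them pins the maximizer to the corner $(1/2,0,1)$.

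The step I expect to fight hardest is the non-monotonicity of the MSTD event in $B$, which blocks the clean coupling for (i) and (ii); absent a slick inequality, the fallback is the laborious termwise fringe analysis, and even there one must verify monotonicity simultaneously for every minimal fringe tuple rather than for a single dominant one. I would regard a fully rigorous proof of (a) as substantially harder than Theorems \ref{thmConvergence}--\ref{thmContinuous}, which is consistent with the authors leaving the statement as a conjecture.
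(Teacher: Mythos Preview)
The statement you are attempting to prove is labeled a \emph{conjecture} in the paper and is explicitly listed among the open problems in \S6; the authors offer no proof, only numerical evidence from $P_8$ and the symmetry observation $Q_n(p)=Q_n(1-p)$ on the slice $(\rho_1,\rho_2)=(0,1)$. There is thus no paper proof to compare against --- your proposal is an attack on an open problem, and the authors themselves flag it as such.

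That said, your strategy for assertion (a) has a concrete gap: the global monotonicity claims (i) and (ii) are false as stated. Along the slice $\rho_2=0$ one has $P(1/2,0,0)=0$ (since $\rho_1+\rho_2=0$ forces $B=\emptyset$) while $P(1/2,1,0)\approx 4.5\times 10^{-4}$ (the classical $(A,A)$ case), so $P$ \emph{increases} in $\rho_1$ there, contradicting (i). Dually, along $\rho_1=1$ one has $P(1/2,1,1)=0$ but $P(1/2,1,0)>0$, so $P$ \emph{decreases} in $\rho_2$, contradicting (ii). By the continuity of Theorem~\ref{thmContinuous}, both reversals persist for $\rho_2=\epsilon$ (resp.\ $\rho_1=1-\epsilon$) with $\epsilon>0$ small, so this is not merely a boundary artifact. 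The paper in fact warns you of this: its observed monotonicities for $P_8$ carry the qualifiers ``$\rho_1$ not too large'' and ``$\rho_2$ not too small.'' Hence neither a straight coupling nor your fallback termwise fringe argument can establish (i) or (ii) globally, because the inequalities you want are simply not true everywhere; any genuine proof would have to locate the region where the monotonicities hold and argue separately that the maximum cannot lie outside it. Your handling of (iii) is also narrower than advertised: the $p\leftrightarrow 1-p$ symmetry is specific to the $(0,1)$ slice (swapping $A$ and $B$ sends $(p,\rho_1,\rho_2)$ to a different parameter triple in general), so even with concavity on that slice you are dependent on (i)--(ii) to move the maximizer there first. Your final sentence --- that a rigorous proof of (a) would be substantially harder than Theorems~\ref{thmConvergence}--\ref{thmContinuous} --- is well taken and matches the paper's own assessment.
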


\section{When $\vec{p}$ decays with $N$}\label{sec:decay}

As this section is devoted to generalizing Hegarty-Miller's \cite{HM} work where the density depends on the length of the interval, we use $I_N := \{0, 1, \dots, N\}$ instead of $I_n$ below to be consistent with their notation. By having $\vec{p}$ decay with $N$ we expect that there will not be a positive probability of randomly choosing an MSTD correlated pair.

In Theorem \ref{thmConvergence}, we proved that $P(\vec{\rho})>0$ unless $p \in \{0,1\}$ or $\rho_1+\rho_2\in\{0,2\}$. Therefore it is reasonable to consider two types of decay: either $p\to 0$ or $1$ while $\rho_1,\rho_2$ are fixed, or $(\rho_1,\rho_2)$ converges to either $(0,0)$  or $(1,1)$ while $p$ is fixed. In this paper we restrict ourselves to the simplest case, where we fix $(\rho_1,\rho_2)$ and let $p\to 0$. We also assume $1/N = o(p(N))$ to guarantee that $\mathbb{E}[|A|]=p(N)\cdot N$ does not tend to $0$, as otherwise $A$ is close to the empty set and the problem becomes trivial. Here we write $p(N)$ to emphasize the fact that $p$ depends on $N$. Later on, we simply write $p$ without causing confusion.

In order to prove the first and second parts of Theorem \ref{thmDecay}, we use the following definition, which resembles $(2.1)$ in \cite{HM}.

\begin{defi}
For any $(p,\rho_1,\rho_2)$-correlated random pair $(A,B)$ of $I_N$ and any integer $k\ge 1$, let
\be A_k \ = \ \{\{(a_1,b_1),\dots,(a_k,b_k)\}\subset A\times B: a_1+b_1\ = \ \cdots\ = \ a_k+b_k\}.\ee
\end{defi}

Thus $A_k$ is the set of all unordered $k$-tuples of elements in $A\times B$ having the same sum. While better notation would include $B$, we choose the simpler notation $A_k$ so that the formulas below look like the corresponding ones in \cite{HM}.

Let $X_k=|A_k|$, then if $(A,B)$ is a random pair of subsets of $I_N$, $X_k$ is a non-negative integer valued random variable. We first state a useful lemma, whose proof can be found in Appendix \ref{sec:lemma_p_hat}.

\begin{lemma} \label{lem_p_hat} Fix $a, b \in I_N$. The probability that the event $a\in A, b\in B$ or $a\in B, b\in A$ happens is $\hat{p}=p^2(2\rho_1-\rho_1^2)+2p(1-p)\rho_2$ if $a\neq b$, and $p\rho_1$ if $a=b$.
\end{lemma}

\begin{prop}\label{prop_X_k}
With $\hat{p}$ defined as in Lemma \ref{lem_p_hat}, if $\hat{p}=O(N)$ then for each $k \ge 1$ we have
\be\label{E_(X_k)} \mathbb{E}[X_k]\ \ \sim\ \ \frac{2}{(k+1)!}\ \left(\frac{\hat{p}}{2}\right)^k N^{k+1}.
\ee Moreover, $X_k\sim \mathbb{E}[X_k]$ whenever $N^{-(k+1)/k}=o(\hat{p})$.
\end{prop}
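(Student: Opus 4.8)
The plan is to adapt the first- and second-moment arguments of Hegarty--Miller \cite{HM} to the correlated setting, using the quantity $\hat{p}$ from Lemma \ref{lem_p_hat} as the effective single-representation probability. First I would sort the $k$-tuples by their common sum: writing $N_s$ for the number of pairs $(a,b)\in A\times B$ with $a+b=s$, any $k$-subset of $A\times B$ with common sum lies entirely on the antidiagonal $\{(a,s-a)\}$, so
\be X_k\ =\ \sum_{s=0}^{2N}\binom{N_s}{k}.\ee
Taking expectations by linearity and expanding each $\binom{N_s}{k}$ as a sum over $k$-subsets $T$ of the antidiagonal for $s$ gives $\mathbb{E}[X_k]=\sum_s\sum_T\mathbb{P}(T\subseteq A\times B)$. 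Since distinct elements of $I_N$ are chosen independently, $\mathbb{P}(T\subseteq A\times B)$ factors over the elements occurring among the coordinates of $T$, each factor being $p$, $\rho_1 p+\rho_2(1-p)$, or $p\rho_1$ according as the element is forced into $A$ only, $B$ only, or $A\cap B$; Lemma \ref{lem_p_hat} records that a single conjugate pair $\{a,s-a\}$ supplies a representation with probability $\hat{p}$.

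Next I would extract the leading term. For $p=p(N)\to 0$ the dominant contribution comes from $k$-subsets $T$ whose $k$ underlying conjugate pairs $\{a,s-a\}$ are distinct and non-colliding, each independently ``active'' with probability $\hat{p}$; since the antidiagonal for sum $s$ carries $m(s):=\#\{a:0\le a\le N,\ 0\le s-a\le N\}$ values and hence $\sim m(s)/2$ conjugate pairs, such configurations number $\sim\binom{m(s)/2}{k}\sim (m(s)/2)^k/k!$ and each carries probability $\sim\hat{p}^{\,k}$. As $s$ runs over $[0,2N]$ the quantity $m(s)$ increases from $1$ to $N+1$ and back, so $\sum_s (m(s)/2)^k\sim 2\cdot 2^{-k}N^{k+1}/(k+1)$, and assembling the pieces yields $\mathbb{E}[X_k]\sim \frac{2}{(k+1)!}(\hat{p}/2)^k N^{k+1}$, the factor $2^{-k}$ being exactly what converts the per-pair probability $\hat{p}$ into $\hat{p}/2$.

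For the concentration statement I would use the second moment method. Writing $\mathbb{E}[X_k^2]=\sum_{T,T'}\mathbb{P}(T\cup T'\subseteq A\times B)$ over ordered pairs of admissible $k$-subsets, the pairs with disjoint coordinate sets reproduce $\mathbb{E}[X_k]^2$, while the pairs sharing one or more elements are combinatorially rarer and, after the same element-by-element bookkeeping, contribute only $o(\mathbb{E}[X_k]^2)$ as soon as $\mathbb{E}[X_k]\to\infty$; hence $\mathrm{Var}(X_k)=o(\mathbb{E}[X_k]^2)$ and $X_k\sim\mathbb{E}[X_k]$ by Chebyshev's inequality. The hypothesis $N^{-(k+1)/k}=o(\hat{p})$ is precisely the requirement $\mathbb{E}[X_k]\to\infty$: since $\mathbb{E}[X_k]=\Theta(\hat{p}^{\,k}N^{k+1})$, we have $\mathbb{E}[X_k]\to\infty$ if and only if $\hat{p}\,N^{(k+1)/k}\to\infty$.

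The main obstacle is controlling the \emph{degenerate} configurations so as to pin down the constant $(\hat{p}/2)^k$ exactly. For the expectation this requires showing that $k$-subsets in which two pairs collide, in which a single conjugate pair $\{a,s-a\}$ contributes two ordered representations at once (the $A\cap B$ event, of probability comparable to $(p\rho_1)^2$), in which $a=s/2$ is a fixed point, or which are concentrated near the boundary sums, all contribute only lower-order terms; this is exactly where the identity $\hat{p}=2\big(p^2\rho_1+p(1-p)\rho_2\big)-(p\rho_1)^2$ must be tracked carefully. For the variance the parallel difficulty is to organize the overlapping pairs $(T,T')$ by the number and type of shared elements and bound each class by $o(\mathbb{E}[X_k]^2)$, again using $\mathbb{E}[X_k]\to\infty$.
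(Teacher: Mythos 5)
Your proposal takes essentially the same route as the paper's proof (which itself follows Lemma 2.1 of Hegarty--Miller): organize the $k$-tuples by conjugate pairs $\{a,s-a\}$, each occurring with probability $\hat{p}$, check that the degenerate configurations (repeated elements / the $A\cap B$ event of probability $p\rho_1$) contribute lower order so that $\mathbb{E}[X_k]\sim \xi_{1,k}\hat{p}^{\,k}\sim \frac{2}{(k+1)!}(\hat{p}/2)^k N^{k+1}$, and then run the second-moment method over overlapping tuples. One caution: your claim that the overlap term is $o(\mathbb{E}[X_k]^2)$ ``as soon as $\mathbb{E}[X_k]\to\infty$'' is not quite the right justification --- the paper's computation gives $\Delta/\mathbb{E}[X_k]^2 = O(1/(Np))$ via the estimate $\mathbb{P}(E)=\Theta(\hat{p}^{\,2}/p)$ for a shared integer, so what is actually invoked is the section's standing hypothesis $N^{-1}=o(p)$, with $N^{-(k+1)/k}=o(\hat{p})$ serving separately to make $\mathbb{E}[X_k]\to\infty$ and absorb the diagonal term of the variance.
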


\begin{proof} As much of the proof is similar to that of Lemma 2.1 of \cite{HM}, we only give a sketch and prove the different parts. There are two types of $k$-tuples: those consisting of $2k$ distinct elements of $I_N$ (type 1 tuples) and those in which one element is repeated twice in one pair and the sum of each pair is even (type 2 tuples). Let $\xi_{1,k}(N)$ and $\xi_{2,k}$ be the total numbers of $k$-tuples of those two types. As proved in \cite{HM},
\be \xi_{1,k} \ =\ \sum_{n=2k}^{2N-2k}{\min\{\lfloor \frac{n}{2}\rfloor,\lfloor\frac{2N-n}{2}\rfloor\} \choose k}\ \sim\  \frac{2}{2^k(k+1)!}N^{k+1}
\ee
and
\be\xi_{2,k}(N) \ =\ O(N^k).\ee By Lemma \ref{lem_p_hat}, the probability for each $k$-tuple of type 1 to occur is $\hat{p}^k$, and that of type 2 is $\hat{p}^{k-1}p\rho_1$. Since $X_k$ can be written as a sum of indicator variable $Y_{\alpha}$, one for each unordered $k$-tuple $\alpha$ of type 1 or 2, we have
\be \mathbb{E}[X_k] \ =\ \xi_{1,k}(N)\cdot \hat{p}^k+\xi_{2,k}(N)\cdot \hat{p}^{k-1}p\rho_1.
\ee
By the assumption $1/N=o(p)$,
\be \frac{\xi_{2,k}(N)\hat{p}^{k-1}p\rho_1}{\xi_{1,k}\hat{p}^k} \ =\ \frac{O(N^{k}p\rho_1)}{O(N^{k+1}\hat{p})} \ =\ \frac{1}{O(N[p(2-\rho_1)+2(1-p)\rho_2 / \rho_1])} \ =\ o(1).
\ee
Hence
\be \mathbb{E}[X_n]\ \sim\ \xi_{1,k}(N)\cdot \hat{p}^k\ \sim\ \frac{2}{(k+1)!}\left(\frac{\hat{p}}{2}\right)^k N^{k+1}.
\ee

To prove the strong concentration by the mean of $X_k$ whenever $N^{-(k+1)/k}=o(\hat{p})$, we use the standard moment method as in \cite{HM}. We need to show \be\Delta\ = \ o(\mathbb{E}[X_k]^2)=o(N^{2k+2}\hat{p}^{2k}),\ee where
\be \Delta \ := \ \sum_{\alpha\sim \beta}\mathbb{P}(Y_\alpha\cap Y_\beta),
\ee
the sum being over pairs of $k$-tuples which have at least one number in common. Similar to the previous part, we can prove that the main contribution to $\Delta$ comes from pairs $\{\alpha,\beta\}$ where each $k$-tuple consists of $2k$ distinct elements and has exactly one element in common. As shown in the proof of Lemma 2.1 in \cite{HM}, the number of such pairs is $O(N^{2k+1})$. For each of the $4k-1$ elements in $I_N$, the probability they are chosen to be in two $k$-tuples, each tuple containing $2k$ distinct numbers and the two tuples having exactly one common element, is $\hat{p}^{2k-2}\cdot \mathbb{P}(E)$ where $E$ denotes the event for three distinct integers $a,b,c\in I_n$ that the pairs $(a,b)$ and $(a,c)$ are each chosen in a $k$-tuple. We use the following lemma (see Appendix \ref{sec:lemma_p_hatprime} for a proof).

\begin{lemma}\label{lem_p_hat'} Notation as above,
$\hat{p}^2/p\ = \ O(\mathbb{P}(E))$.
\end{lemma}

Using the assumption $1/N=o(p)$ we get
\be \frac{\Delta}{N^{2k+2}\hat{p}^{2k}}\ = \ \frac{O(N^{2k+1})\hat{p}^{2k-2}\mathbb{P}(E)}{N^{2k+2}\hat{p}^{2k}}\ = \ \frac{1}{O(Np)}=o(1),
\ee
or \be \Delta\ =\ o(N^{2k+2}\hat{p}^{2k})\ = \ o(\mathbb{E}[X_k]^2)\ee as we wish, completing the proof.
\end{proof}

\begin{proof}[Proof Theorem \ref{thmDecay}] We proceed similarly to the proof of Theorem \ref{thmHegartyMiller} in \cite{HM}. Although in our case we consider sums and differences of two sets instead of one, once we have the results in Proposition \ref{prop_X_k}, the rest is the same as \cite{HM}. As the arguments are similar, in parts (i) and (ii) below we analyze $\mathscr{S}$ first and then $\mathscr{D}$, while in part (iii) we first study $\mathscr{S}^c$ and then $\mathscr{D}^c$.\\

\noindent \textbf{\texttt{Proof of Part (i):}} In this regime $\hat{p}=o(1/N)$. Since $\rho_1,\rho_2$ are fixed, $p^2=O(\hat{p})$ and hence $ N^{-2}=o(\hat{p})$. Thus by \eqref{E_(X_k)}, $\mathbb{E}[X_1]\sim  \frac{1}{2}\hat{p}N^2\gg 1$. Similarly $\mathbb{E}[X_2]\sim \frac{1}{12}N^3 \hat{p}^2$ if $N^{-3/2}=o(\hat{p})$ and is $O(1)$ otherwise. Since $\hat{p}=o(1/N)$, $N^3 \hat{p}^2=o(N^2\hat{p})$. Thus in both cases $\mathbb{E}[X_2]=o(\mathbb{E}[X_1])$. Similarly, $\mathbb{E}[X_k]=o(\mathbb{E}[X_1])$ for any $k\geq 2$. In other words, as $N\to\infty$ all but a vanishing portion of pairs of elements in $(A,B)$ have distinct sums. It follows that
\be \mathscr{S}\ \sim\  \mathbb{E}[X_1]\ \sim\ \frac{1}{2}\hat{p}N^2. \ee

To prove the result for $\mathscr{D}$, we define for each $k\geq 1$
\be\label{A'_k} A'_k \ :=\ \{\{(a_1,b_1), \dots, (a_k,b_k)\}\subset A\times B\cup B\times A: a_1-b_1=\cdots = a_k-b_k\neq 0\},\ee
and proceed in a completely analogous manner to the proof of $\mathscr{S}$.\\ \

\noindent \textbf{\texttt{Proof of Part (ii):}} In this regime $\hat{p}=c/N$. Thus for any $k\geq 1$, $N^{-(k+1)/k}=o(N^{-1})=o(\hat{p})$. It follows from \eqref{E_(X_k)} that \be X_k\ \sim\  \frac{2}{(k+1)!}\left(\frac{cN^{-1}}{2}\right)^kN^{k+1} \ = \ \frac{2\cdot (c/2)^k}{(k+1)!}N. \ee
Let $\mathscr{P}$ be the partition on $A_1$ from the relation \be (a_1, b_1)\ \sim\ (a_2,b_2)\ \ \ {\rm if\ and\ only\ if}\ \ \ a_1+b_1\ = \ a_2+b_2.\ee Let $\tau_i$ denote the number of parts of size $i$ for each $i>0$. Then $\mathscr{S}=\sum_{i=0}^\infty \tau_i$. As proved in \cite{HM}, \be \mathscr{S}\ \sim\ \sum_{k=1}^\infty (-1)^{k-1}X_k\ \sim\  2\left(\sum_{k=1}^\infty \frac{(-1)^{k-1}\left(\frac{c}{2}\right)^k}{(k+1)!}\right)\cdot N=g(c/2)N.
\ee The proof for the difference set again proceeds similarly, using \eqref{A'_k}.\\ \

\noindent \textbf{\texttt{Proof of Part (iii):}} We use Lemmas \ref{lem_p3_p4} and \ref{lem_p3_difference}. Note
\be \mathbb{E}[\mathscr{S}^c] \ =\ \sum_{i=0}^{2N} \mathbb{P}(i\notin A+B)\ \sim\ 4\sum_{i=0}^{\lfloor N/2 \rfloor} \rho_3^i\ \sim\ \frac{4}{1-\rho_3}.
\ee Notice that $1-\rho_3=\hat{p}$ since $\rho_3$ and $\hat{p}$ are the probabilities of two complementary events (alternatively, we can check it directly from their formulas). So $\mathbb{E}[\mathscr{S}^c]\sim 4/{\hat{p}}$. Similarly $\mathbb{E}[\mathscr{D}^c]\sim 2/\hat{p}$.
\end{proof}

\begin{remark} The phase transition happens when $\hat{p}=\Theta(N^{-1})$. If we let $(\rho_1,\rho_2)=(1,0)$ then $\hat{p}=p^2$ and our result is consistent with the result in \cite{HM} (see Theorem \ref{thmHegartyMiller}). If we let $(\rho_1,\rho_2)=(0,1)$ then $\hat{p}=2p(1-p)=\Theta(p)$. However, since $1/N=o(p)=o(\hat{p})$, the phase transition never happens. In this $(A,A^c)$ case, the size of the difference set is always almost surely double the size of the sum set, which somewhat supports our conjecture that MSTD pairs are most abundant in the $(A,A^c)$ case.
\end{remark}

\section{Minimal MSTD pairs}\label{sec:smallest pair}

In this section we prove that the minimal MSTD pair of sets has size (3,5) or (4,4).

\begin{lemma} \label{lemmaMinimal}
If $A,B\subset I_n$ is an MSTD pair, then there must exist $a_1<a_2<a_3\in A$ and $b_1<b_2<b_3\in B$ such that $a_1+b_3=a_2+b_2=a_3+b_1$.
\end{lemma}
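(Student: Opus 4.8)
The plan is to recast the conclusion as a statement about repeated sums and then argue the contrapositive by a counting comparison of the sum and difference sets. First I would observe that the asserted configuration exists if and only if some value $s \in A+B$ has at least three representations $s = a+b$ with $a \in A$, $b \in B$; that is, $|A \cap (s-B)| \ge 3$. Indeed, given three such representations with distinct first coordinates $\alpha_1 < \alpha_2 < \alpha_3$, setting $a_i = \alpha_i$ and $b_i = s - \alpha_{4-i}$ produces $b_1 < b_2 < b_3$ in $B$ with $a_1 + b_3 = a_2 + b_2 = a_3 + b_1 = s$, and the converse is immediate. So it suffices to prove the contrapositive: if every $s$ has at most two such representations, then $(A,B)$ is not MSTD, i.e. $|A+B| \le |\pm(A-B)|$.

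Next I would set up the bookkeeping. Writing $r(s) = |A \cap (s-B)|$ for the number of representations of $s$ and assuming $r(s) \le 2$ for all $s$, let $m_1$ and $m_2$ count the sums with exactly one and exactly two representations. Then $|A|\,|B| = m_1 + 2m_2$ and $|A+B| = m_1 + m_2$. On the difference side it is cleanest to fold: with $W = \{\,|a-b| : a \in A,\, b \in B\,\}$ and $z = 1$ if $A \cap B \ne \emptyset$ and $z=0$ otherwise, the symmetry of $\pm(A-B)$ about $0$ gives $|\pm(A-B)| = 2|W| - z$. Thus the target inequality becomes $m_1 + m_2 \le 2|W| - z$, a clean comparison between the number of distinct sums and twice the number of distinct absolute differences.

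To prove this inequality I would exploit the additive-energy identity $\sum_s r(s)^2 = \sum_t d(t)^2$, where $d(t) = |A \cap (t+B)|$ counts representations of differences; under $r(s) \le 2$ the left side equals $m_1 + 4m_2$, which constrains how concentrated the multiplicities $d(t)$ can be, and combined with the reflection symmetry $d(t) \leftrightarrow d(-t)$ this is what should force enough distinct absolute differences. Equivalently, one may view $M_{ij} = a_i + b_j$ as a matrix strictly increasing in both indices whose constant anti-diagonals (the level sets of the sum) all have length at most $2$ by hypothesis, and seek to charge each occupied anti-diagonal to an occupied folded diagonal (a value of $|a_i - b_j|$) in an at-most-two-to-one fashion.

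The main obstacle will be the regime where $A-B$ and $B-A$ overlap heavily — extremally, when $A = B$ — because there $\pm(A-B)$ is no larger than the one-sided difference set and the naive estimates degrade: a term-by-term bound, or a Cauchy--Schwarz lower bound on $|\pm(A-B)|$, loses exactly the factor that separates the two sides (one checks the inequality is tight for an interval and for a Sidon set with $A=B$), and a charging that routes each sum to an absolute difference of one of its own representatives fails outright, since all the diagonal pairs $(a,a)$ would be forced onto $|a-b|=0$. Handling this case is where the real work lies; I expect it to require a global argument that reroutes these charges, for instance a Hall-type matching between occupied anti-diagonals and occupied folded diagonals, or an induction on $|A|+|B|$ that peels off $\max A$ while carrying the accumulated slack in $|\pm(A-B)| - |A+B|$.
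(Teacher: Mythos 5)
Your opening reduction is correct: the asserted configuration exists precisely when some $s$ has at least three representations $s=a+b$ with $(a,b)\in A\times B$, so it suffices to show that $r(s)\le 2$ for all $s$ forces $|A+B|\le|\pm(A-B)|$. Your sum-side bookkeeping ($|A|\,|B|=m_1+2m_2$, $|A+B|=m_1+m_2$) and the energy identity $\sum_s r(s)^2=\sum_t d(t)^2$ are also correct ingredients. But the proof has a genuine gap: the central inequality $m_1+m_2\le 2|W|-z$ is never established. You correctly identify that every routing/charging scheme you can think of breaks down when $A-B$ and $B-A$ overlap heavily (e.g.\ $A=B$, where the mass piles up on the difference $0$), and you end by listing candidate repairs (a Hall-type matching, an induction peeling off $\max A$) without carrying any of them out. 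Since that inequality is the entire content of the lemma, the argument is incomplete as written.

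The difficulty is self-inflicted: folding the difference set to $W=\{|a-b|\}$ is the wrong move, and the paper never does it. Instead it compares $|A+B|$ with the \emph{one-sided} set $|A-B|$ and finishes with the trivial bound $|A-B|\le|\pm(A-B)|$. The mechanism is the elementary support bound: for differences, each $d\in A-B$ contributes $1=|Y_d|-(|Y_d|-1)\ge |Y_d|-\binom{|Y_d|}{2}$, so $|A-B|\ge |A|\,|B|-|J|$ where $|J|=\sum_d\binom{|Y_d|}{2}$ counts unordered colliding pairs of representations; this holds no matter how concentrated the $Y_d$ are, and a huge $|Y_0|$ only increases the slack $\binom{|Y_0|}{2}-(|Y_0|-1)$, so the case you flag as hard actually helps. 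On the sum side the hypothesis $|X_s|\le 2$ gives $|X_s|-1=\binom{|X_s|}{2}$ exactly, hence the \emph{equality} $|A+B|=|A|\,|B|-|I|$ with $|I|=\sum_s\binom{|X_s|}{2}$. Finally the involution $\{(a,b),(c,d)\}\mapsto\{(a,d),(c,b)\}$ (equivalently, your energy identity restricted to off-diagonal quadruples) gives $|I|=|J|$, so $|A+B|=|A|\,|B|-|J|\le|A-B|\le|\pm(A-B)|$. You already had both key facts --- the energy identity and the representation counts --- and only needed to aim them at $|A-B|$ rather than at $2|W|-z$.
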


\begin{proof}[Proof] Assume there do not exist such $a_i, b_i$. Consider
\bea I & \ = \  & \{\{(a,b),(c,d)\}\subset A\times B: a+b=c+d\}\nonumber\\  J & = & \{\{(a,b),(c,d)\}\subset A\times B: a-b=c-d\}.\eea
 Notice that $a+b=c+d$ if and only if $a-d=c-b$. Hence we have a bijection between $I$ and $J$. In particular, this implies $|I|=|J|$ as they are finite sets.

For each $s\in [0,2n]$ and $d\in [-n,n]$, define \bea X_s & \ = \ & \{(a,b)\in A\times B: a+b=s\}\nonumber\\  Y_d & = & \{(a,b)\in A\times B: a-b=d\}.\eea
It is easy to see that \be \sum_s |X_s|\ =\ \sum_d |Y_d|\ =\ |A|\cdot |B| \ee and \be |I| \ = \ \sum_{s:|X_s|\geq 2} {|X_s| \choose 2};\quad\ \ \ |J|\ =\ \sum_{d:|Y_d|\geq 2}{|Y_d| \choose 2}.\ee We therefore find

\begin{eqnarray} |\pm(A-B)| &\ \geq \ & \ |A-B| \ = \ \sum_{d\in A-B}1\ =\ \sum_{d\in A-B} [|Y_d|-(|Y_d|-1)]\nonumber\\
&\geq &  \sum_{d\in A-B} |Y_d|-\sum_{d:|Y_d|\geq 2}{|Y_d| \choose 2}\ =\ |A|\cdot |B|-|J|.
\end{eqnarray}
Similarly
\bea |A+B| & \ =\  & \sum_{s\in A+B} 1\ =\ \sum_{s\in A+B} [|X_s|-(|X_s|-1)] \nonumber\\ & \ =\  & \sum_{s\in A+B} |X_s|-\sum_{s:|X_s|\geq 2}{|X_s| \choose 2}\ =\ |A|\cdot |B|-|I|.
\eea
The equality $|X_s|-1={|X_s| \choose 2}$ holds because $|X_s|\leq 2$ for all $s$ by our assumption that there do not exist three pairs of the same sum. Hence $|\pm(A-B)|\geq |A||B|-|J|=|A||B|-|I|=|A+B|$, contradicting the assumption that $(A,B)$ is an MSTD pair.\end{proof}

The intuition behind this lemma is that if there do not exist such $a_i,b_i$, since $a+b=c+d$ if and only if $a-d=c-b$, each \emph{collapsed} sum generates one \emph{collapsed} difference and thus the sum set cannot win.
Incidentally, this connects our two observations in the introduction: the property that the difference of any number with itself is equal to 0 is equivalent with the commutativity of addition because $a-a=b-b (=0)$ implies $a+b=b+a$ for any $a,b\in A$. The difference set has the advantage because $0$ is a big \emph{collapsed} difference.
To see this explicitly, we write
\be
|A+A|\ = \ |A|^2-|I|+\sum\left[ {|X_s| \choose 2}-(|X_s|-1)\right]\ = \ M+\sum \frac{(|X_s|-1)(|X_s|-2)}{2}\nonumber
\ee
\be
|A-A|\ = \ |A|^2-|J|+\sum\left[ {|Y_d| \choose 2}-(|Y_d|-1)\right]\ = \ M+\sum\frac{(|Y_d|-1)(|Y_d|-2)}{2},
\ee
where $M=|A|^2-|I|=|A|^2-|J|$. This implies the larger the sizes of $\{X_s\}_{s\in A+B}$ (or $\{Y_d\}_{d\in A-A}$) are, the larger the size of $A+A$ (or $A-A)$ is.
Hence $Y_0=|A|$, the biggest size a $Y_d$ or $X_s$ can obtain, will give the difference set a huge advantage. This argument also somewhat supports our conjecture that $(A,A^c)$ MSTD pairs are most abundant, because $0$ is no longer a big collapsed difference.

This purely combinatorial observation  can be applied to find some necessary conditions for a set, or a pair of sets to be sum-dominant in any setting (numbers, points in a plane, MSTD sets in two or higher dimension and so on). For example, an MSTD set of $I_n$ must not have only two elements because if so $|X_s|\leq 2$ and hence $|A+A|=M\leq |A-A|$. Likewise, if $A=\{a,b,c\}$ where $0\leq a<b<c\leq n$ is MSTD, then one of $X_s$ must be 3, which means $a+c=b+b=c+a=k$ for some integer $k$. This forces $A$ to be a symmetric set, and therefore not sum-dominant (see \cite{MO}).

Going back to the proof of theorem \ref{thmMinimalSize}, from Lemma \ref{lemmaMinimal} we immediately obtain the following corollary, as we saw above $A$ must have at least three elements.

\begin{cor} \label{cor_2_k}
There does not exist an MSTD pair $(A,B)$ of size $(2,k)$ or $(k,2)$ for any $k\geq 2$.
\end{cor}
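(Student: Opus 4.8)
The plan is to derive Corollary \ref{cor_2_k} as a direct consequence of Lemma \ref{lemmaMinimal}, which guarantees that any MSTD pair $(A,B)$ must contain three elements $a_1 < a_2 < a_3$ in $A$ and three elements $b_1 < b_2 < b_3$ in $B$ with $a_1 + b_3 = a_2 + b_2 = a_3 + b_1$. The key observation is that this configuration forces both $A$ and $B$ to have at least three elements. Indeed, the three pairs $(a_1,b_3), (a_2,b_2), (a_3,b_1)$ exhibit three distinct elements of $A$ (namely $a_1, a_2, a_3$) and three distinct elements of $B$ (namely $b_1, b_2, b_3$), so necessarily $|A| \geq 3$ and $|B| \geq 3$.

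I would carry out the argument by contradiction. Suppose $(A,B)$ is an MSTD pair with $|A| = 2$ (the case $|B| = 2$ is symmetric, swapping the roles of the two sets). By Lemma \ref{lemmaMinimal}, there exist distinct elements $a_1 < a_2 < a_3$ in $A$, which is impossible since $A$ has only two elements. This contradiction establishes that no MSTD pair of size $(2,k)$ can exist. The case $(k,2)$ follows identically once one notes that the roles of $A$ and $B$ in Lemma \ref{lemmaMinimal} are interchangeable: since $A+B = B+A$ and $\pm(A-B) = \pm(B-A)$, the pair $(B,A)$ is MSTD if and only if $(A,B)$ is, so applying the lemma to $(B,A)$ forces $|B| \geq 3$ as well.

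There is essentially no obstacle here, as the corollary is an immediate reading-off of the numerical content of the lemma. The only point requiring a moment of care is confirming the symmetry between $A$ and $B$ so that both the $(2,k)$ and $(k,2)$ cases are covered; this follows since the sum set and symmetrized difference set are both invariant under interchanging $A$ and $B$. One might present the proof in a single sentence, simply citing Lemma \ref{lemmaMinimal} and observing that it demands $|A| \geq 3$ and $|B| \geq 3$, which rules out any pair having a component of size $2$.
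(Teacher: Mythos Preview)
Your proposal is correct and is essentially the same argument as the paper's: both derive the corollary directly from Lemma~\ref{lemmaMinimal}, observing that the guaranteed configuration $a_1<a_2<a_3\in A$ and $b_1<b_2<b_3\in B$ forces $|A|\ge 3$ and $|B|\ge 3$. Your explicit remark on the $A\leftrightarrow B$ symmetry to cover both $(2,k)$ and $(k,2)$ is a nice clarification, though the lemma already supplies three elements in each set directly.
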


Theorem \ref{thmMinimalSize} follows directly from the above corollary and the two following propositions.

\begin{prop}\label{prop_3_3}
There does not exist MSTD pair $(A,B)$ of size $(3,3)$.
\end{prop}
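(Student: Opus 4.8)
The plan is to invoke Lemma \ref{lemmaMinimal} to pin down the structure of any candidate pair completely, and then simply compare $|A+B|$ with $|\pm(A-B)|$. Suppose $(A,B)$ with $A=\{a_1<a_2<a_3\}$ and $B=\{b_1<b_2<b_3\}$ is a $(3,3)$ MSTD pair. Since $|A|=|B|=3$, the three elements furnished by Lemma \ref{lemmaMinimal} must be all of $A$ and all of $B$, so we are forced to have $a_1+b_3=a_2+b_2=a_3+b_1$. Reading off the two equalities gives $a_2-a_1=b_3-b_2$ and $a_3-a_2=b_2-b_1$, i.e.\ the consecutive gaps of $B$ are the reverse of those of $A$. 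After translating so that $a_1=0$ and writing $x=a_2-a_1\ge 1$, $y=a_3-a_2\ge 1$, $t=b_1\ge 0$, this becomes
\be A\ =\ \{0,\,x,\,x+y\},\qquad B\ =\ \{t,\,t+y,\,t+x+y\}. \ee
Note we may translate $A$ and $B$ by a common constant but not independently, so $t$ genuinely remains a free parameter that the argument must handle.

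Next I would bound $|A+B|$. The equal-sum condition says precisely that the value $t+x+y$ is hit by three distinct pairs, so among the nine pairs there is a forced triple collapse in the sum set; this removes at least two elements and gives $|A+B|\le 7$ (with equality, after a short check that the remaining six sums are distinct, exactly when $x\ne y$).

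The crux — and the main obstacle — is to show $|\pm(A-B)|\ge 7$, because the one-sided difference set is \emph{too small to help}: a direct computation gives $|A-B|=6<7$ (it is $5$ when $x=y$). Thus the difference set can only win through the union $\pm(A-B)=(A-B)\cup(B-A)$, and everything hinges on these two six-element sets being nearly disjoint. Writing $A-B=-t+D$ with $D=\{0,\,x,\,-y,\,x+y,\,x-y,\,-x-y\}$, the overlap $(A-B)\cap(B-A)$ attains its full size $6$ exactly when $A-B$ is symmetric about the origin. I would rule this out in two quick steps: symmetry forces $\max(A-B)=-\min(A-B)$, hence $t=0$; and $D=-D$ forces $\sum_{d\in D}d=0$, whereas $\sum_{d\in D}d=2x-2y$, so symmetry can occur only when $x=y$. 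Therefore for $x\ne y$ the overlap is at most $5$, giving $|\pm(A-B)|=12-|\text{overlap}|\ge 7\ge |A+B|$.

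Finally I would dispose of the remaining case $x=y$, where $A=\{0,x,2x\}$ and $B=\{t,t+x,t+2x\}$ are arithmetic progressions of common difference $x$; here $|A+B|=5$ while $|\pm(A-B)|\ge |A-B|=5$, so again $|A+B|\le|\pm(A-B)|$. In every case we obtain $|A+B|\le|\pm(A-B)|$, contradicting the assumption that $(A,B)$ is MSTD and proving that no $(3,3)$ MSTD pair exists.
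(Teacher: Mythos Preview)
Your proof is correct. One tiny quibble: you assert $t=b_1\ge 0$, but after translating by $-a_1$ the new $b_1$ could be negative; this doesn't matter, since you never actually use $t\ge 0$ (the identity $\max(A-B)+\min(A-B)=-2t$ forces $t=0$ regardless of sign).

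The approach is genuinely different from the paper's. The paper normalizes to $a_1=0$, assumes WLOG $a_1\le b_1$, and then splits into two cases: (i) $b_1=a_1$, where $a_3=b_3$ and a direct count of positive differences gives $|\pm(A-B)|\ge 7$; and (ii) $b_1>a_1$, where the three positive differences $b_1,b_2,b_3$ already force $|\pm(A-B)|\ge 6$, and the MSTD hypothesis then pins the difference set down to exactly $\{\pm b_1,\pm b_2,\pm b_3\}$, after which the remaining differences are chased through that list until the pair collapses to $A=\{0,2b,4b\}$, $B=\{b,3b,5b\}$, which is checked not to be MSTD. Your argument instead parametrizes both sets uniformly by $(x,y,t)$ and handles all cases at once via the overlap bound $|(A-B)\cap(B-A)|\le 5$ when $x\ne y$, using the clean observation that $A-B=-(A-B)$ would force both $t=0$ and $\sum_{d\in D}d=2x-2y=0$. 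This is more conceptual and avoids the element-by-element chase of the paper's Case~2; the paper's argument, on the other hand, is slightly more hands-on and makes the structure of the extremal near-miss explicit.
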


\begin{proof} Our starting point is Lemma \ref{lemmaMinimal}, which gives the existence of a triple in $A$ and a triple in $B$; as each of these sets has cardinality 3, we see these sets equal these special triples. Thus, if such an MSTD pair existed, we would have $A=\{a_1,a_2,a_3\}$ and $B=\{b_1,b_2,b_3\}$, with $|A+B|>|(A-B)\cup (B-A)|$, $a_1<a_2<a_3$ and $b_1<b_2<b_3$. Lemma  \ref{lemmaMinimal} then implies $a_1+b_3=a_2+b_2=a_3+b_1$, which gives $|A+B|\leq 9-2=7$ because we have at least two \textit{collapsed} sums. Without loss of generality we may assume $a_1\leq b_1$ and $a_1=0$.\\ \

\noindent \texttt{Case 1: $b_1=a_1$:} As $b_1 = a_1$ we have $a_3=b_3$. If $a_2=b_2$ then $A=B$. This cannot be sum-dominant because the smallest sum-dominant set has size 8. So $a_2\neq b_2$, and there are at least 3 positive differences $a_2, b_2, a_3$ in $(A-B)\cup (B-A)$. Since $0\in A-B$, $|(A-B)\cup (B-A)|\geq 7\geq |A+B|$, a contradiction.\\ \

\noindent \texttt{Case 2: $b_1>a_1$:} In this case $b_1<b_2<b_3$ are 3 positive distinct numbers in $B-A$. Thus $|(A-B)\cup(B-A)|\geq 6$. Since $|A+B|\leq 7$ we must have $(A-B)\cup(B-A)|=\{\pm b_1,\pm b_2,\pm b_3\}$. As $-b_3<b_1-a_3<b_1-a_2<b_1$, it must happen that $b_1-a_3=-b_2$ and $b_1-a_2=-b_1$, or $a_2=2b_1$ and $a_3=2b_1+b_2$. The difference $b_2-a_2=b_2-2b_1$ is bigger than $-b_1$ but less than $b_2$, and the only number in $\pm (A-B )$ between those two numbers is $b_1$, hence $b_2-2b_1=b_1$, or $b_2=3b_1$. Letting $b=b_1$, we can rewrite the pair $(A, B)$ as $A=\{0,2b,4b\}$ and $B=\{b,3b,5b\}$. It is easy to check that this is not an MSTD pair.
\end{proof}

\begin{prop}\label{prop_3_4}
There does not exist an MSTD pair $(A,B)$ of size $(3,4)$.
\end{prop}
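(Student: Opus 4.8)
\noindent The plan is to use Lemma~\ref{lemmaMinimal} to fix the coarse shape of a hypothetical MSTD pair, normalize, and then reduce to a finite search over the single ``free'' element of $B$. Suppose toward a contradiction that $(A,B)$ is an MSTD pair with $|A|=3$ and $|B|=4$. By Lemma~\ref{lemmaMinimal} there are three elements of $B$ on which the sums collapse together with all of $A$. After a simultaneous translation and dilation of $A$ and $B$ (the operations preserving both $|A+B|$ and $|\pm(A-B)|$), I would write $A=\{0,u,v\}$ with $0<u<v$ and the collapsing triple as $\beta=k-A=\{k-v,\,k-u,\,k\}\subseteq B$, where $k$ is the common sum; then $B=\beta\cup\{\gamma\}$ for the fourth element $\gamma$. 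Note that, unlike the $(3,3)$ case of Proposition~\ref{prop_3_3}, a genuine parameter survives, namely the position $k$ of $\beta$ relative to $A$, since one is only allowed to translate $A$ and $B$ together.

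\noindent Two facts drive the argument. First, the collapsing triple exhibits three pairs with equal sum, so $|A+B|\le |A|\,|B|-2=10$. Second, $\beta$ forces a large symmetric block into the difference set: the differences $S_0:=A-\beta=(A+A)-k$ lie in $A-B$, hence $\pm S_0:=S_0\cup(-S_0)\subseteq\pm(A-B)$. A short computation shows that when $A$ is \emph{not} an arithmetic progression, $|\pm S_0|\ge 8$, with equality only at the finitely many ``reflection-optimal'' positions for which $2k$ admits two distinct representations as a sum of two elements of $A+A$ (namely $2k\in\{u+v,\,2u+v,\,u+2v\}$); for every other $k$ one has $|\pm S_0|\ge 9$. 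Combining the two facts, an MSTD pair must satisfy $10\ge |A+B|>|\pm(A-B)|\ge|\pm S_0|\ge 8$, so $|\pm S_0|\in\{8,9\}$ and, crucially, $\pm(A-B)=\pm S_0$: the fourth element must add \emph{no} new difference, i.e.\ $A-\gamma\subseteq\pm S_0$.

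\noindent I would then show this containment cannot hold for a genuinely new $\gamma$. Since symmetric additions come in pairs, any $\gamma$ with $A-\gamma\not\subseteq\pm S_0$ enlarges $\pm(A-B)$ by at least two elements, giving $|\pm(A-B)|\ge|\pm S_0|+2\ge 10\ge |A+B|$ and thus ruling out MSTD. So it suffices to prove that $A-\gamma\subseteq\pm S_0$ forces $\gamma\in\beta$ (contradicting $|B|=4$). Writing $\pm S_0$ as the union of the two translates $(A+A)-k$ and $k-(A+A)$, the condition $\{-\gamma,\,u-\gamma,\,v-\gamma\}\subseteq\pm S_0$ breaks into finitely many cases according to how these three values distribute between the two translates; each case reduces to a statement of the form ``a translate of $A$ (or of $-A$) lies inside $A+A$'', which for non-AP $A$ has only the solutions corresponding to $\gamma\in k-A=\beta$.

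\noindent The main obstacle is precisely this enumeration at the reflection-optimal positions, where $|\pm S_0|=8$ leaves the least slack and a would-be new $\gamma$ must be excluded by checking that it either reintroduces an element of $\beta$, lands in one of the special slopes below, or creates an extra coincident sum that drops $|A+B|$ below $9$. Finally I would dispatch the excluded degenerate slopes $v\in\{2u,\,\tfrac{3}{2}u,\,3u\}$ (where $\pm S_0$ collapses), the isolated slopes such as $v=5u$ (where a translate of $\pm A$ happens to fit inside $\pm S_0$), and the arithmetic-progression case $v=2u$ (where a second triple sum can appear) by direct computation, exactly in the explicit style of the proof of Proposition~\ref{prop_3_3}: in each case $A$ and $\beta$ are written out and the resulting one-parameter family in $\gamma$ is checked to have $|\pm(A-B)|\ge|A+B|$ throughout.
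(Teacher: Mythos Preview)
Your strategy differs from the paper's. The paper first proves a structural lemma: since exactly one $s$ has $|X_s|=3$ and (by the $|I|=|J|$ identity of Lemma~\ref{lemmaMinimal}) every $|Y_d|\le 2$, one must have $|\pm(A-B)|=|A-B|$, i.e.\ $A-B$ is closed under negation. This immediately forces $a_3-b_1=b_4-a_1$, after which the cases split according to which three of the four $b_i$ form the collapsing triple (equivalently, whether the extra element of $B$ is extremal). You instead leave both the shift $k$ and the fourth element $\gamma$ free and bound $|\pm S_0|$ from below.

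There is a genuine gap in your reduction. The claim that $A-\gamma\subseteq\pm S_0$ forces $\gamma\in\beta$ is false. If all three elements of $A-\gamma$ land in the second translate $k-(A+A)$, the condition becomes $(k+\gamma)-A\subseteq A+A$, whose unique solution for non-AP $A$ is $k+\gamma=u+v$; hence $\gamma=u+v-k$, and this lies in $\beta=\{k,k-u,k-v\}$ \emph{only} at your three reflection-optimal positions $2k\in\{u+v,2u+v,u+2v\}$. At every other $k$ this $\gamma$ is a genuinely new fourth element with $\pm(A-B)=\pm S_0$. The resulting pairs are indeed not MSTD, but for a reason you only planned to invoke at the reflection-optimal positions: the rogue $\gamma$ creates two extra coincident sums, so that $|A+B|\le 8<|\pm S_0|$ (for instance at $k=v$ one gets $B=\{0,u,v-u,v\}$ and $|A+B|=8<9$). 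Thus you need systematic bookkeeping of sum collapses for every rogue $\gamma$ at every $k$, not merely at the three special ones, and the mixed cases (some differences in each translate) multiply the work further. The paper's symmetry lemma sidesteps all of this by pinning the fourth element of $B$ to the extremes of $A$ from the outset.
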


The proof of this proposition is similar to that of Proposition \ref{prop_3_3}, except there are many more cases. Details can be found in Appendix \ref{sec:prop_3_4}. This completes the proof of Theorem \ref{thmMinimalSize}. \hfill $\Box$

\section{Conclusion and future work}

We extended the results of \cite{He,HM,MO,Zh2} of MSTD sets to MSTD correlated pairs. In particular, we proved that for each $\vec{\rho}=(p,\rho_1,\rho_2)\in [0,1]^3$ the limiting probability $P(\vec{\rho})$ of picking an MSTD $\vec{\rho}$-correlated pair exists and is positive unless $p\in\{0,1\}$ or $\rho_1+\rho_2\in\{0,2\}$. Furthermore, the function $P(\vec{\rho})$ is continuous and thus attains its maximum at some point, which we conjecture is $(1/2,0,1)$. We characterize the phase transition when we let $\vec{\rho}$ decay with $n$. Finally, we found the minimal size of an MSTD pair $(A,B)$.

We end with some of the more interesting and important open questions.

\begin{itemize}
\item[(1)] Prove or disprove Conjecture \ref{conjecture1}.

\item[(2)] Find an efficient algorithm to calculate values of $P(\vec{\rho})$, and investigate further the analytic properties of $P$.

\item[(3)] Prove the strong concentration of $\mathscr{S}^c$ and $\mathscr{D}^c$ in the case of slow decay (i.e., when $N^{-1/2}=o(\hat{p})$). Do similar results hold for other types of decay, namely $p\to 1$ or $(\rho_1,\rho_2)\to (0,0),(1,1)$?

\item[(4)] Are the examples of the MSTD pairs of size $(4,4)$ and $(3,5)$ found in  Theorem \ref{thmMinimalSize} unique up to linear transformation?

\item[(5)] Generalize the results from \cite{ILMZ} to linear combinations of correlated sets.

 \end{itemize}

\appendix

\section{Proof of Lemmas \ref{lem_p3_p4} and \ref{lem_p3_difference}}\label{sec:lem_p3_p4}

\begin{proof}[Proof of Lemma \ref{lem_p3_p4}] Let $E_{a,b}$ denote the event ($a\in A$ and $b\in B$) or ($a\in B$ and $b\in A$). For each $k\in I_n$, $k$ is not in $A+B$ if and only if for every pair $(a,b)$ in $[0,n]$ with $k=a+b$, the event $E_{a,b}$ does not happen.

If $a\neq b$ then by Bayes' formula
\begin{eqnarray}
 \mathbb{P}(E^c) & \ = \ & \mathbb{P}(E^c|a\in A, b\in A)\mathbb{P}(a\in A,b\in A)+\mathbb{P}(E^c|a\in A, b\notin A)\mathbb{P}(a\in A,b\notin A)\nonumber\\
 & & \ +\ \mathbb{P}(E^c|a\notin A, b\in A)\mathbb{P}(a\notin A,b\in A)+
 \mathbb{P}(E^c|a\notin A, b\notin A)\mathbb{P}(a\in A,b\notin A)
 \nonumber\\
&=& (1-\rho_1)^2p^2+2(1-\rho_2)p(1-p)+(1-p)^2=\rho_3.
\end{eqnarray}

If $a=b$, then similarly we find
\be \mathbb{P}(E^c)\ =\ \mathbb{P}(E^c|a\in A)\mathbb{P}(a\in A)+P(E^c|a\notin A)P(a\notin A)=(1-\rho_1)p+(1-p)=\rho_4. \ee

The claims now follow by counting how many ways $k$ can be written as sum of two elements in $I_n$ (these ways are $0+k$, $1+(k-1)$, and so on,
and the fact that no element is repeated in two different pairs (because if $a+b=a+c=k$ then $b=c$). \end{proof}

\begin{proof}[Proof of Lemma \ref{lem_p3_difference}]
We write $k$ as differences of two elements in $I_n$: $k=k-0=(k+1)-1=\cdots$. If $k>n/2$, no element is repeated in two pairs, thus similar to Lemma \ref{lem_p3_p4} we have $\mathbb{P}(k\notin \pm(A-B))
=\rho_3^{n-k}$.

If $k\leq n/2$, we use the same method used in Lemma 10 of \cite{MO}. Define the set
\be J \:=\ \left\{j: 0<j<n-k ;\quad  \left\lfloor \frac{j}{k}\right\rfloor \text{ is even}\right\}.
\ee
In other words, $J$ contains the first $k$ integers starting at $a$, then omits the next $k$ integers, and so on. It is easy to see that $|J|\geq n/3$ and $j+k\notin J$ if $j\in J$. Therefore, if we write $k=a_i-b_i$ for $b_i\in J$, we are guaranteed that the $a_i$ and $b_i$ are all distinct. We then have the same independence as before, hence
\be \mathbb{P}(k\notin \pm (A-B))\ \leq\ \mathbb{P}(\cup_{a_i-b_i=k,b_i\in J}(a_i,b_i)\notin (A\times B)\cup (B\times A)) \ = \ \rho_3^{|J|} \leq \rho_3^{n/3}.
\ee
\end{proof}

\section{Proof of Lemma \ref{lem_p_hat}}\label{sec:lemma_p_hat}

\begin{proof}[Proof of Lemma \ref{lem_p_hat}]
Denote the event in the lemma by $E$. We break the analysis into two cases, depending on whether or not $a$ equals $b$.\\ \

\noindent \texttt{Case I: $a \neq b$:} We apply Bayes' formula to $E$. Our partition is the four disjoint events on whether or not $a$ or $b$ is in $A$.
\begin{eqnarray}
\mathbb{P}(E)  & \ = \ & \mathbb{P}(E| a\in A,b\in A) \cdot\mathbb{P}(a\in A,b\in A)+  \mathbb{P}(E| a\in A,b\notin A)\cdot \mathbb{P}(a\in A,b\notin A)\nonumber\\ & &\ +\ \mathbb{P}(E| a\notin A,b\in A) \cdot\mathbb{P}(a\notin A,b\in A)+ \mathbb{P}(E| a\notin A,b\notin A)\cdot \mathbb{P}(a\notin A,b\notin A)\nonumber\\
& \ = \ & (1-(1-\rho_1)^2)\cdot p^2+\rho_2 \cdot p(1-p)+\rho_2 \cdot p(1-p)+0\nonumber\\
& \ = \ & p^2(2\rho_1-\rho_1^2)+2p(1-p)\rho_2.
\end{eqnarray}

\ \\

\noindent \texttt{Case II: $a = b$:}  We proceed similarly, and find
\begin{align}
\mathbb{P}(E)\ = \ \mathbb{P}(E|a\in A)\cdot\mathbb{P}(a\in A)+\mathbb{P}(E|a\notin A)\cdot\mathbb{P}(a\notin A)
\ = \  \rho_1 \cdot p.
\end{align}
\end{proof}


\section{Proof of Lemma \ref{lem_p_hat'}}\label{sec:lemma_p_hatprime}

\begin{proof}[Proof of Lemma \ref{lem_p_hat'}] Let $E$ be the event from the lemma, and consider the events $E_1=(a\in A,b\in B)$ and $(a\in B,b\in A)$, and $E_2= (a\in A,c\in B)$ and $(a\in B, c\in A)$. It immediately follows that $E=E_1\cap E_2$. We again use Bayes' formula, with our partition the four distinct events arising from whether or not $a$ and $b$ are in $A$ and $B$. We find
 \begin{eqnarray}
 \mathbb{P}(E) & \ = \ & \mathbb{P}(E|a\in A,a\in B)\cdot \mathbb{P}(a\in A,a\in B)+  \mathbb{P}(E|a\in A,a\notin B)\cdot \mathbb{P}(a\in A,a\notin B)\nonumber\\
 & &\ +\
 \mathbb{P}(E|a\notin A,a\in B)\cdot \mathbb{P}(a\notin A,a\in B)+
  \mathbb{P}(E|a\notin A,a\notin B)\cdot \mathbb{P}(a\notin A,a\notin B)\nonumber\\
 & \  = \ &  [p^2+2p(1-p)\rho_2+(1-p)^2\rho_2^2]\cdot p\rho_1
  \nonumber\\
  & &\ +\ [p^2\rho_1^2+2p(1-p)\rho_1\rho_2+(1-p)^2\rho_2^2]\cdot p(1-\rho_1)
   + p^2\cdot (1-p)\rho_2+0\nonumber\\
 & \ = \ & p(1-p)^2\rho_2^2+2p^2(1-p)\rho_1\rho_2(2-\rho_1)+p^3\rho_1(1+\rho_1-\rho_1^2)+p^2(1-p)\rho_2.
\end{eqnarray}

Note that we also use Bayes' formula to calculate $\mathbb{P}(E|a\in A,a\in B)$ and so on by dividing into four cases depending on whether or not each $b,c$ is in $A$ or not. Thus
\begin{eqnarray} \hat{p}^2 &\ = \ & \left[p^2\rho_1(2-\rho_1)+2p(1-p)\rho_2\right]^2\nonumber\\
&=& p^4\rho_1^2(2-\rho_1)^2+4p^3(1-p)\rho_1\rho_2(2-\rho_1)+ 4p^2(1-p)^2\rho_2^2.
\end{eqnarray}
Since $p\to 0$ and $\rho_1,\rho_2$ are fixed, both $p\mathbb{P}(E)$ and $\hat{p}^2$ have form $Ap^2+o(p^2)$ for some $A>0$; hence $\hat{p}^2=O(p\mathbb{P}(E))$ as desired.
\end{proof}

\section{Proof of Proposition \ref{prop_3_4}}\label{sec:prop_3_4}

\begin{proof}[Proof of Proposition \ref{prop_3_4}]

Assume $A=\{a_1,a_2,a_3\}$ and $B=\{b_1,b_2,b_3,b_4\}$ be an MSTD pair in $I_n$ where $0\leq a_1<a_2<a_3\leq n$ and $0\leq b_1<b_2<b_3<b_4\leq n$.
\begin{lemma}\label{lem_appendixE}
We have $d\in A-B$ if and only if $-d\in A-B$.
\end{lemma}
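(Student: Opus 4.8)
My plan is to begin by reformulating the statement so that it becomes a claim about the size of the difference set. Since $-d \in A-B$ holds exactly when $d \in B-A$, the asserted equivalence ``$d\in A-B \iff -d\in A-B$'' is the same as the set identity $A-B = B-A$; that is, the difference set is symmetric about $0$. Writing $t := |(A-B)\cap(B-A)|$, one always has $t \le |A-B| = |B-A|$ and $|\pm(A-B)| = 2|A-B| - t$, so my goal reduces to proving that $t = |A-B|$, i.e. that $t$ is as large as it can be.

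Next I would set up the bookkeeping already used in Lemma \ref{lemmaMinimal}. For $s$ and $e$ let $X_s = \{(a,b)\in A\times B: a+b=s\}$ and $Y_e = \{(a,b)\in A\times B : a-b = e\}$; since $|A|=3$ and $|B|=4$, every class has size at most $3$. Letting $s_j$ (resp. $d_j$) be the number of classes $X_s$ (resp. $Y_e$) of size exactly $j$, a direct count gives $|A+B| = 12 - s_2 - 2s_3$ and $|A-B| = 12 - d_2 - 2d_3$, while the bijection from the proof of Lemma \ref{lemmaMinimal} (the identity $|I|=|J|$) yields $s_2 + 3s_3 = d_2 + 3d_3$. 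Substituting these into the MSTD hypothesis $|A+B| > |\pm(A-B)| = 2|A-B| - t$ and simplifying with the last identity, the inequality collapses neatly to $t > 12 - d_2 - d_3 - s_3$. Moreover, since $|\pm(A-B)| \ge |A-B|$, the MSTD hypothesis already forces $s_3 > d_3$.

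The crux, and the step I expect to be the main obstacle, is the complementary bound $s_3 \le d_3 + 1$. Here I would interpret $d_3$ as the number of translates of $A$ contained in $B$ and $s_3$ as the number of translates of the reflected set $-A$ contained in $B$; equivalently, each counts the $3$-element subsets of $B=\{b_1<b_2<b_3<b_4\}$ realizing a prescribed gap pattern. There are only four such subsets, with gap patterns $(h_1,h_2)$, $(h_1,h_2+h_3)$, $(h_1+h_2,h_3)$, and $(h_2,h_3)$, where $h_i=b_{i+1}-b_i$. A short check shows that two of these patterns coincide only when $h_1=h_2=h_3$, i.e. when $B$ is an arithmetic progression, and in that single case one computes $s_3=d_3$ directly. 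Hence $s_3,d_3\le 1$ whenever $B$ is not an AP, so $s_3-d_3\le 1$ in every case.

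Finally I would combine the two bounds: $s_3 > d_3$ together with $s_3 \le d_3+1$ forces $s_3 = d_3+1$. Feeding this into $t > 12 - d_2 - d_3 - s_3 = |A-B|-1$ and using $t\le |A-B|$ gives $t=|A-B|$, which is precisely the symmetry $A-B=B-A$ and hence the desired equivalence. The only genuinely delicate point is the translate-counting argument of the third paragraph; everything else is a routine consequence of the counting identities and the MSTD inequality.
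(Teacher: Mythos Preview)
Your proof is correct. It differs from the paper's in the central step, though both routes ultimately hinge on the identity $|I|=|J|$ from Lemma~\ref{lemmaMinimal}.

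The paper argues directly, by case analysis on the index triples $(k,j,i)$, that there is \emph{exactly one} sum $s$ with $|X_s|=3$ (in your notation, $s_3=1$): if two such sums existed the index triples would have to be $(1,2,3)$ and $(2,3,4)$, forcing both $A$ and $B$ to be arithmetic progressions with the same common difference, which is then checked not to be MSTD. With $s_3=1$ in hand, the chain $|A+B|=|A||B|-|I|+1>|\pm(A-B)|\ge|A-B|\ge|A||B|-|J|=|A||B|-|I|$ squeezes all the right-hand quantities to equality, giving both $d_3=0$ and $|\pm(A-B)|=|A-B|$.

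You instead prove the relative bound $s_3\le d_3+1$ via the gap-pattern count on the four $3$-subsets of $B$, and combine it with $s_3>d_3$ (which you extract cleanly from $|A+B|>|A-B|$ together with $|I|=|J|$). This is a more structural argument and avoids the ad~hoc index bookkeeping; it also yields the stronger conclusion $s_3=d_3+1$ in one stroke, from which your algebra $t>|A-B|-1$ finishes. The AP case of $B$ does need the separate verification that $s_3=d_3$ (a bound $s_3,d_3\le 2$ alone would not suffice there), and your sketch handles it correctly: the multiset of gap patterns $\{(h,h),(h,2h),(2h,h),(h,h)\}$ is invariant under reversal. Both proofs are short; yours is a bit more conceptual and would scale more gracefully if one wanted analogous lemmas for slightly larger $B$.
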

\begin{proof}
By Lemma \ref{lemmaMinimal}, there must exist a number $s$ such that $|X_s|=3$, or $a_1+b_i=a_2+b_j=a_3+b_k=s$ for some $1\leq k<j<i\leq 4$. There are four possibilities for $(k,j,i)$, which are $(1,2,3),(1,2,4),(1,3,4)$ and $(2,3,4)$.

It is easy to see that there is no $t$ such that $|X_t|\geq 4$. If there exists another number $s'\neq s$ such that $|X_s|=|X_{s'}|=3$, equivalently there exists  $(i',j',k')$ such that
$a_1+b_{i'}=a_2+b_{j'}=a_3+b_{k'}=s'$. Since $s\neq s'$, $i\neq i'$, $j\neq j'$ and $k\neq k'$. The only possibility is $(k,j,i)=(1,2,3)$ and $(k',j',i')=(2,3,4)$ or vice versa. In either case,
\be \label{equality1} a_1+b_3=a_2+b_2=a_3+b_1
\ee
\be a_1+b_4=a_2+b_3=a_3+b_2.
\ee
Subtracting those two chains of equalities gives $b_4-b_3=b_3-b_2=b_2-b_1$; let this common difference be $d$. From \eqref{equality1}, $a_2-a_1=b_3-b_2=d$ and $a_3-a_2=b_2-b_1=d$, which means $(a_i)$ and $(b_i)$ are two arithmetic sequences with same distance. Itt is easy to check that in this case $(A,B)$ is not an MSTD pair.

This implies there exists exactly one $s\in A+B$ such that $|X_s|=3$. From the proof of Lemma \ref{lemmaMinimal}, we see that in order for $|A+B|>|\pm (A-B)|$, it must happen $|Y_d|\leq 2$ for all $d\in A-B$, and $|\pm (A-B)|=|A-B|$, which means if $d\in A-B$, so is $-d$ and vice versa.
\end{proof}

From Lemma \ref{lem_appendixE}, we see that the smallest and largest numbers in $A-B$, which are $a_1-b_4$
and $a_3-b_1$ respectively,  must be inverse of each other. So
\be\label{eq_appendixE}
a_3-b_1=b_4-a_1
\ee

\noindent \texttt{Case 1: $a_1+b_4\neq a_3+b_1$} : so $(k,j,i)=(1,2,3)$ or $(2,3,4)$. It is easy to see that if $(A,B)$ is an MSTD pair, so is $(n-A,n-B)$ where $n-X=\{n-x: x\in X\}$. Therefore without loss of generality we can assume $(k,j,i)=(2,3,4)$, or $a_1+b_4=a_2+b_3=a_3+b_2$. Since we can translate the set by a number, assume $b_1=0$ (now $a_i, b_i$ are not necessary in $I_n$). From \eqref{eq_appendixE}, $a_1=b_4-a_3=b_2-a_1$, or $b_2=2a_1$. As $b_1<b_2, 0<2a_1$, or $a_1>0$. We can rewrite $b_i$ by $a_i$ as followed: $b_1=0; b_2=2a_1; b_4=a_3-b_1+a_1=a_1+a_3; b_2=a_1+b_4-a_2=2a_1+a_3-a_2$. So
\be A=\{a_1,a_2,a_3\};\quad  B=\{0,2a_1,2a_1+a_3-a_2,a_3\}.\ee

 We can now write down all elements (might be repeated) of $A-B$ which are $\{\pm a_1,\pm a_3, a_2, a_2-a_1-a_3, a_2-2a_1,2a_2-2a_1-a_3,a_3-2a_1\}$. By Lemma \ref{lem_appendixE}, $a_2\in A-B\Rightarrow -a_2\in A-B$, thus
one of 4 numbers $\{a_2-a_1-a_3, a_2-2a_1,2a_2-2a_1-a_3,a_3-2a_1\}$ must be equal to $-a_2$.
\\
\\
\texttt{Case 1.1: $a_2-2a_1=-a_2$} or $a_1=a_2$, a contradiction.\\
\texttt{Case 1.2: $a_3-2a_1=-a_2$}, or $a_3=2a_1-a_2<a_1$, a contradiction.\\
\texttt{Case 1.3: $a_2-a_1-a_3=-a_2$} or $a_1+a_3=2a_2$. Let $a_2-a_1=a_3-a_2=d$, then $A=\{a_1,a_1+d,a_1+2d\}$ and $B=\{0,2a_1,2a_1+d,2a_1+2d$. We can directly check that this pair is not sum-dominant.\\
\texttt{Case 1.4: $2a_2-2a_1-a_3=-a_2$}, or $2a_1+a_3=3a_2$. Let $a_2-a_1=d$, then $a_3-a_2=2a_2-2a_1=2d$. Then $A=\{a_1,a_1+d,a_1+3d\}$ and $B=\{0,2a_1,2a_1+2d,2a_1+3d$. Again it is straightforward to check that this pair is not MSTD.
\\
\\
\noindent \texttt{Case 2: $a_1+b_4=a_3+b_1$}: two pairs $(a_1,b_4)$ and $(a_3,b_1)$ have same sums and differences, hence $a_1=b_1$ and $a_3=b_4$. Without loss of generality, assume $a_1=b_1=0$ (as we can translate everything by $-a_1$) and $a_2+b_2=a_3$.
Rewrite
\be A\ = \ \{0,a_2,a_3\},\ \ \ B\ = \ \{0,a_3-a_2,b_3,a_3\}.\ee
$A-B$ consists of at most 9 elements $\{0,a_2,\pm a_3,a_2-a_3,2a_2-a_3,-b_3,a_2-b_3,a_3-b_3\}$. By Lemma \ref{lem_appendixE}, $-b_3\in A-B\Rightarrow -b_3\in A-B$. Since $0<b_3<a_3$, one of $\{a_2,2a_2-a_3,a_2-b_3,a_3-b_3\}$ must be equal to $b_3$.
\\
\\
\texttt{Case 2.1: $a_2=b_3$.}
\\
\texttt{Case 2.2: $2a_2-a_3=b_3$.}
\\
\texttt{Case 2.3: $a_2-b_3=b_3$.}
\\
\texttt{Case 2.4: $a_3-b_3=b_3$.}
\\
\\
In the first case, $|Y_0|=3$ because $0=a_1-b_1=a_2-b_3=a_3-b_4$, which contradicts our observation before that $|Y_d|\leq 2$ for all $d\in A-B$. In any of the other three latter cases, we reduce our sets to two variables $a_2$ and $a_3$. Continuing our argument based on Lemma \ref{lem_appendixE}, we can find a relation between $a_2$ and $a_3$ and check again to see that there is no such MSTD pair. This completes the proof of Proposition \ref{prop_3_4}.
\end{proof}


\ \\

\end{document}